\documentclass[a4paper,12pt]{article}
\usepackage{amssymb,amsthm,amsmath,amsfonts}
\usepackage{graphicx}
\usepackage[left=2.5cm,right=2.5cm,top=2cm,bottom=2cm]{geometry}
\usepackage{cite}
\usepackage{color}

 \newtheorem{thm}{Theorem}[section]

\theoremstyle{definition}
\newtheorem{defn}[thm]{Definition}
\theoremstyle{remark}
\newtheorem{rem}[thm]{Remark}
\newtheorem*{ex}{Example}
\numberwithin{equation}{section}

\begin{document}
	
\title{A Non-integer Dimensional Space Approach to the Moisil-Teodorescu Operator}
\small{
\author{
Juan Bory-Reyes$^{1}$, Marco Antonio P\'erez-de la Rosa$^{2}$,\\ Jos\'e Oscar González-Cervantes$^{3}$, Juan Eduardo Napoles-Valdes$^{4{\footnote{corresponding author}}}$}}
\vskip 1truecm
\date{\small $^{1}$ SEPI-ESIME-IPN-Zacatenco, Instituto Polit\'ecnico Nacional, 07738, Ciudad M\'exico, M\'exico \\ Email: juanboryreyes@yahoo.com\\ $^{2}$ Department of Actuarial Sciences, Physics and Mathematics. Universidad de las Am\'{e}ricas Puebla 72810, Puebla. Mexico\\ Email: marco.perez@udlap.mx\\ $^{3}$ Departamento de Matem\'aticas, ESFM-Instituto Polit\'ecnico Nacional. 07338, Ciudad M\'exico, M\'exico\\ Email: jogc200678@gmail.com\\ $^{4}$ Facultad de Ciencias Exactas y Naturales y Agrimensura. Universidad Nacional del Nordeste. 3400, Corrientes, Argentina\\ Email: profjnapoles@gmail.com}

\maketitle

\begin{abstract}
The vector calculus in non-integer dimensional space (NIDS), including the NIDS version of the standard vector differential operators (gradient, divergence, and curl) is well-known. A deformation of the quaternionic Moisil-Teodorescu operator, written in terms of NIDS derivatives is the main purpose of this article. Along similar lines, we consider the NIDS reformulation of the quaternionic Bitsadze operator and the Lam\'{e}-Navier operator of the three-dimensional elasticity theory. Also, a quaternionic reformulation of a NIDS time-harmonic Maxwell system is introduced, whose solutions are directly related with those of the perturbed NIDS Moisil-Teodorescu operator. Finally, a generalized approach to the study is addressed.
\end{abstract}

\noindent
\textbf{Keywords:} Non-integer dimensional space, conformable derivative, Moisil-Teodorescu operator, Bitsadze operator, Lam\'{e}-Navier system.\\
\textbf{Math Subject Classification (2020):} 30G35, 31B10, 74B05.

\section{Introduction and Basic definitions}
In 2014, Khalil, Al Horani, Yousef and Sababheh \cite{KHYS} define a local kind derivative called by the authors “conformable fractional” derivative, depending just on the basic limit definition of the derivative. Indeed, for $f:[0,\infty)\to\mathbb{R}$ the conformable fractional derivative of order $\alpha$, with $0<\alpha<1$ was defined as
\begin{equation}\label{cdo}
(D_{\alpha}f)(x)=\lim_{h\to 0}\frac{f\left(x+x^{1-\alpha}h\right)-f(x)}{h},
\end{equation}
for all $x>0$ and the derivative at $0$ is defined as 
\begin{equation}
(D_{\alpha}f)(0) = \lim_{x\to 0+}(D_{\alpha}f)(x).
\end{equation} 
It may be worth reminding the reader that the name "fractional-like" instead "conformable fractional" derivative has appeared in the literature, see for instance \cite{MaSt}. 

We follow \cite{ABA} just to say that conformable fractional derivative is merely a generalized $q$-derivative (Jackson's-derivative \cite{J}) for $q = 1+hx^{-\alpha}$, when $h\rightarrow 0$ and $q\rightarrow 1$. 

The conformable fractional derivative operator has recently garnered significant attention from scientists and has discussed in several works. The advantages geometric and physical implications of the conformable derivative is well-known. In \cite{AARB, ABA, H, ABD} and the bibliography therein, the reader will be quoted of some illustrative examples. 

Several works shown directly and indirectly that the “conformable fractional” derivatives suggested in 2014 cannot be considered as derivatives of non-integer orders, nor as fractional derivatives, see \cite{Ab, A, AM, T1}. The “conformable fractional” derivatives are operators of the integer orders, which where proposed and applied since at least in 2005-2014 to describe fractal media. This is the reason, why the term “conformable” is advised instead of the introduced expression “conformal fractional” to refer to the operator associated with (\ref{cdo}). As was suggested by Tarasov in \cite{T1}, conformable operators should be identify with operators in non-integer dimensional spaces (NIDS) in continuous models with power-law density of states (DOS) and therefore it can be called NIDS operators.

Throughout this paper, we shall use the expression ‘‘NIDS’’ in operator associated to the conformable derivative.  

Quaternionic analysis (functions in the kernel of the first-order elliptic (but not strongly elliptic) Moisil-Theodoresco operator \cite{MT}) has became a well established branch in mathematics and greatly successful in many different directions. Much work has been done to develop this extension of complex analysis focuses on the connection between analysis, geometry with the algebraic structure of quaternions, especially by the Swiss mathematician Fueter \cite{Fu} (summarized in English by Deavours \cite{De} and expanded by Sudbery \cite{Sub}) with fair success. A thorough account of the theory can be found in \cite{GuSp}.

The Moisil-Theodoresco operator is nowadays considered to be a good analogue of the usual Cauchy-Riemann operator of complex analysis to the quaternionic setting and it is a square root of the scalar Laplace operator in $\mathbb{R}^3$, which can be seen as a generalization of the Bitsadze operator to the space $\mathbb{R}^3$ and plays an interesting role in the treatment of several problems from mathematical physics with quaternionic analysis techniques, in particular the Lamé-Navier system from the theory of elasticity, see \cite{B, Di, Dzhuraev}. 

The success and application of conformable calculus is a strong motivation to show its connection with quaternionic analysis. In particular, the main goal of the present work is to develop a quaternionic structure for a novel conformable Moisil-Teodorescu operator, to be named NIDS Moisil-Teodorescu operator, which is obtained after doing so for the ordinary divergence, gradient and curl operators. To the best of our knowledge this work is the first investigation of results in quaternionic analysis in the setting of the conformable calculus theory and we consider that it constitutes the groundwork for building a complete and comprehensive new function theory.

The work is organized as follows: After this brief introduction, Section 2 provides the foundational concepts and tools needed. In particular, contains the definition of the NIDS standard vector differential operators (gradient, divergence, and curl), together with some of their relations. In Section 3, we introduce the NIDS Moisil-Teodorescu operator besides the corresponding Laplacian and the NIDS version of the Bitsadze operator as well as a NIDS Lam\'{e}-Navier operator equation for fractal media. In Section 5, a quaternionic reformulation of a NIDS time-harmonic Maxwell system is introduced, whose solutions are directly related with those of the perturbed NIDS Moisil-Teodorescu operator. Finally, further enhancements through a generalized approach of the study is proposed. 
\section{Preliminaries}
In this section we provide a brief exposition of foundational concepts and tools developed in \cite{T1, T2} according to our purpose, which serve as the basis for our subsequent advancements. For a treatment of a more general case we refer the reader to \cite{T3, T4}.
\subsection{Generalization of gradient, divergence and curl operators to non-integer dimensional space}

In order to describe fractal media and distributions by the continuum models with non-integer dimensional space (NIDS), we should use the concepts of fractal density of states $c_n\left(D,\vec{x}\right)$ and distribution function $f\left(\vec{x}\right)$, where $\vec{x}\in\mathbb{R}^n$ ($n=3$ in our case). For systems with $n$ degrees of freedom, we can consider $\mathbb{R}^n$ as a configuration space, in which fractal distributions of particle states are described. The fractal density of states (FDOS) describes how closely packed permitted states (or places) in the space $\mathbb{R}^n$, where the particles are distributed. 

The expression $c_n\left(D,\vec{x}\right)dV_v$ is equal to the number of permitted states between $V_n$ and $V_n+dV_n$ in fractal media that is distributed in the space $\mathbb{R}^n$, $n\in\mathbb{N}$ with physical dimension $D\in (0,n]$. The distribution function $f\left(\vec{x}\right)$ describes distribution of physical values such as mass, electric charge, number of particles on a set of permitted states. In general, we cannot reduce all properties of the fractal medium to just the distribution function, and we should use the density of states (DOS) concept. The concepts of DOS and NIDS are well-known and actively used in various areas of theoretical and applied physics. 

The single-variable measures are 
\[d\mu_1(\alpha_k,x_k)=c_1(\alpha_k,x_k)dx_k,\quad k=1,\dots,n,\]
where
\[c_1(\alpha_k,x_k)=\frac{\pi^{\alpha_k/2}}{\Gamma(\alpha_k/2)}|x_k|^{\alpha_k-1},\]
with $\Gamma$ the Gamma function and $\alpha_k\in(0,1]$ such that $D=\sum_{k=1}^{n}\alpha_k$ is interpreted as a dimension with $D\in(0, n]$. If all $\alpha_k=1$, we get $D=n$, i.e., the dimension of space is the standard integer dimension. If all $\alpha_k=\alpha$, where $0<\alpha<1$, we have a NIDS for isotropic fractal media and $D=n\alpha$. In general, we have a NIDS, if at least one of the parameters $\alpha_k$ is not equal to $1$.

The single-variable measures can be mathematically interpreted as the volume of an infinitesimally small $\alpha_k$-dimensional ball, where $|x_k|=R_k$ is considered as a radius of this $\alpha_k$-dimensional ball. The functions $c_1(\alpha_k,x_k)$ should be considered as a density of states of fractal media of distributions along the $X_k$-axis. The DOS $c_1(\alpha_k,x_k)$ describes how closely packed permitted states of matter particles in the space $\mathbb{R}$.

For $n=1$, the first-order derivative in NIDS with the dimension $\alpha\in(0,1]$ is defined as:
\[\left(D^{T,\alpha}f\right)(x)=\frac{1}{c_1(\alpha,x)}\frac{\partial}{\partial x}f(x)=\frac{\Gamma(\alpha/2)}{\pi^{\alpha/2}}|x|^{1-\alpha}\frac{\partial}{\partial x}f(x).\]

We now indicate an expression for the Nabla-operator in NIDS.
\[\nabla_{(\alpha)} = \sum_{k=1}^{n} \mathbf{e}_{k} \frac{1}{c_1(\alpha_k,x_k)}\displaystyle \frac{\partial}{\partial x_{k}},\]
where $\mathbf{e}_{k}$, $k= 1,\dots,n$ denotes the standard orthonormal basis of $\mathbb{R}^n$ and $(\alpha):=(\alpha_1,\dots,\alpha_n).$

The NIDS standard vector differential operators with $n=3$ and $D\in (0, 3]$ are expressed through the DOS as follows:
\begin{defn}
Let $\Omega$ be a domain in $\mathbb{R}^3$ and $f\in C^{1}\left(\Omega, \mathbb{R}\right)$. The NIDS gradient is defined as
\begin{equation}\label{nids_grad}
	\mathrm{grad}_{(\alpha)}[f]\left(\vec{x}\right):= \nabla_{(\alpha)}[f]  = \sum_{k=1}^{3}\mathbf{e}_{k}\frac{1}{c_1(\alpha_k,x_k)}\frac{\partial f}{\partial x_k}\left(\vec{x}\right).
\end{equation}
\end{defn}

\begin{defn}
Let $\Omega$ be a domain in $\mathbb{R}^3$ and $\vec g \in C^{1}\left(\Omega, \mathbb{R}^3\right)$. The NIDS  divergence of order $\alpha$ of $\mathbf{F}$ is defined as
\begin{equation}\label{nids_div}
\mathrm{div}_{(\alpha)}\left[\vec g \right]\left(\vec{x}\right):=  \nabla_{(\alpha)} \cdot [\vec g ] = \sum_{k=1}^{3}\frac{1}{c_1(\alpha_k,x_k)}\frac{\partial g_k}{\partial x_k}\left(\vec{x}\right),
\end{equation}
where $g_1, \, g_2, \, g_3$  are the components of $\vec g$.
\end{defn}

\begin{defn}
Let $\Omega$ be a domain in $\mathbb{R}^3$ and $\vec g\in C^{1}\left(\Omega, \mathbb{R}^3\right)$. The NIDS  curl of $\vec g $ is defined as
\begin{equation}\label{nids_curl}
\mathrm{curl}_{(\alpha)}\left[\vec g  \right]\left(\vec{x}\right):= \nabla_{(\alpha)}\times [\vec g]  =  \sum_{i,j,k=1}^{3}\mathbf{e}_{i}\epsilon_{ijk}\frac{1}{c_1(\alpha_j,x_j)}\frac{\partial g_k}{\partial x_j}\left(\vec{x}\right),
\end{equation}
where $\epsilon_{ijk}$ is the Levi-Civita symbol.
\end{defn}

The NIDS vector differential operators that we recalled can be applied in ways that lead to a large number of identities, in particular let us show the following result.
\begin{thm}\label{Combinations of conformable operators}
Let $\Omega$ be a domain in $\mathbb{R}^3$, $f\in C^{2}\left(\Omega, \mathbb{R}\right)$ and $\vec g \in C^{2}\left(\Omega, \mathbb{R}^3\right)$. The NIDS operators satisfy the following relations
	\begin{align*}
		\mathrm{div}_{(\alpha)}\left[\mathrm{grad}_{(\alpha)}\left[f\right]\right]&=\sum_{k=1}^{3}\frac{1}{c_{1}^{2}(\alpha_k,x_k)}\left(\frac{\partial^2 f}{\partial x_{k}^2}-\frac{\alpha_k-1}{x_k}\frac{\partial f}{\partial x_k}\right),\\
		\mathrm{curl}_{(\alpha)}\left[\mathrm{grad}_{(\alpha)}\left[f\right]\right]&=0,\\
		\mathrm{grad}_{(\alpha)}\left[\mathrm{div}_{(\alpha)}\left[\vec g\right]\right]&=\sum_{k=1}^{3}\mathbf{e}_k\frac{1}{c_{1}(\alpha_k,x_k)}\left(\sum_{\ell=1}^{3}\frac{1}{c_{1}(\alpha_\ell,x_\ell)}\frac{\partial^2 g_\ell}{\partial x_{k}\partial x_\ell}\right.\\
		&\qquad\left.-\frac{1}{c_{1}(\alpha_k,x_k)}\frac{\alpha_k-1}{x_k}\frac{\partial g_k}{\partial x_k}\right),\\
		\mathrm{div}_{(\alpha)}\left[\mathrm{curl}_{(\alpha)}\left[\vec g  \right]\right]&=0,\\
		\mathrm{curl}_{(\alpha)}\left[\mathrm{curl}_{(\alpha)}\left[\vec g \right]\right]&=\sum_{k=1}^{3}\mathbf{e}_k\left(\sum_{\ell=1,\,\ell\neq k}^{3}\left(\frac{1}{c_{1}(\alpha_\ell,x_\ell)c_{1}(\alpha_k,x_k)}\frac{\partial^2 g_\ell}{\partial x_{\ell}\partial x_k}\right.\right.\\
		&\qquad\left.\left.-\frac{1}{c_{1}^{2}(\alpha_\ell,x_\ell)}\left(\frac{\partial^2 g_k}{\partial x_{\ell}^2}-\frac{\alpha_\ell-1}{x_\ell}\frac{\partial g_k}{\partial x_\ell}\right)\right)\right).
	\end{align*}
\end{thm}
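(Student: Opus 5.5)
The whole proof rests on one elementary identity. Since $c_1(\alpha_k,x_k)$ depends only on $x_k$, the weights $1/c_1(\alpha_j,x_j)$ commute with $\partial/\partial x_k$ whenever $j\neq k$. For $j=k$ we differentiate the weight directly. Writing $\frac{1}{c_1(\alpha_k,x_k)}=\frac{\Gamma(\alpha_k/2)}{\pi^{\alpha_k/2}}|x_k|^{1-\alpha_k}$ and using $\frac{d}{dx}|x|^{1-\alpha}=(1-\alpha)\frac{|x|^{1-\alpha}}{x}$ for $x\neq 0$, we get
\[
\frac{\partial}{\partial x_k}\left(\frac{1}{c_1(\alpha_k,x_k)}\right)=-\frac{\alpha_k-1}{x_k}\,\frac{1}{c_1(\alpha_k,x_k)} .
\]
Hence, for any $C^2$ function $h$,
\[
\frac{1}{c_1(\alpha_k,x_k)}\frac{\partial}{\partial x_k}\left(\frac{1}{c_1(\alpha_k,x_k)}\frac{\partial h}{\partial x_k}\right)=\frac{1}{c_1^2(\alpha_k,x_k)}\left(\frac{\partial^2 h}{\partial x_k^2}-\frac{\alpha_k-1}{x_k}\frac{\partial h}{\partial x_k}\right).
\]
I will establish this first. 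It is valid wherever $x_k\neq 0$, so I will note that the identities are understood on $\Omega$ minus the coordinate planes, or at points where all $x_k\neq0$.

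With this identity the five relations follow by direct expansion from \eqref{nids_grad}, \eqref{nids_div} and \eqref{nids_curl}.

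\begin{itemize}
\item For $\mathrm{div}_{(\alpha)}\mathrm{grad}_{(\alpha)}$, each summand is exactly the left-hand side of the displayed identity with $h=f$.
\item For $\mathrm{grad}_{(\alpha)}\mathrm{div}_{(\alpha)}$, the $k$-th component is $\frac{1}{c_1(\alpha_k,x_k)}\partial_k\sum_\ell \frac{1}{c_1(\alpha_\ell,x_\ell)}\partial_\ell g_\ell$. The terms with $\ell\neq k$ pass the weight through $\partial_k$ unchanged. The term $\ell=k$ produces, via the identity, the extra correction $-\frac{1}{c_1^2(\alpha_k,x_k)}\frac{\alpha_k-1}{x_k}\partial_k g_k$. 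Regrouping gives the stated formula.
\item For $\mathrm{curl}_{(\alpha)}\mathrm{grad}_{(\alpha)}$, the $i$-th component is $\sum_{j,k}\epsilon_{ijk}\frac{1}{c_1(\alpha_j,x_j)c_1(\alpha_k,x_k)}\partial_j\partial_k f$. Here $j\neq k$ is forced by $\epsilon_{ijk}$, so no weight is ever differentiated. The coefficient is symmetric in $(j,k)$ by Schwarz's theorem ($f\in C^2$), while $\epsilon_{ijk}$ is antisymmetric, so the sum vanishes.
\item The relation $\mathrm{div}_{(\alpha)}\mathrm{curl}_{(\alpha)}=0$ follows from the same symmetric--antisymmetric argument: $\sum_{i,j,k}\epsilon_{ijk}\frac{1}{c_1(\alpha_i,x_i)c_1(\alpha_j,x_j)}\partial_i\partial_j g_k=0$.
\end{itemize}

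The step needing the most care is $\mathrm{curl}_{(\alpha)}\mathrm{curl}_{(\alpha)}$, because it mixes differentiated and undifferentiated weights. I will write the $i$-th component as
\[
\sum_{j,k,l,m}\epsilon_{ijk}\epsilon_{klm}\frac{1}{c_1(\alpha_j,x_j)}\partial_j\left(\frac{1}{c_1(\alpha_l,x_l)}\partial_l g_m\right)
\]
and apply the contraction $\sum_k\epsilon_{ijk}\epsilon_{klm}=\delta_{il}\delta_{jm}-\delta_{im}\delta_{jl}$. This leaves two kinds of terms.

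\begin{itemize}
\item The $\delta_{il}\delta_{jm}$ part gives $\sum_j \frac{1}{c_1(\alpha_j,x_j)}\partial_j\bigl(\frac{1}{c_1(\alpha_i,x_i)}\partial_i g_j\bigr)$.
\item The $\delta_{im}\delta_{jl}$ part gives $\sum_j \frac{1}{c_1(\alpha_j,x_j)}\partial_j\bigl(\frac{1}{c_1(\alpha_j,x_j)}\partial_j g_i\bigr)$.
\end{itemize}

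In both sums the $j=i$ terms are identical, including the weight-derivative correction, so they cancel. That cancellation is exactly why the stated formula sums only over $\ell\neq k$. For the remaining $j\neq i$ terms:

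\begin{itemize}
\item in the first sum the weight $1/c_1(\alpha_i,x_i)$ commutes with $\partial_j$, giving $\frac{1}{c_1(\alpha_j,x_j)c_1(\alpha_i,x_i)}\partial_j\partial_i g_j$;
\item in the second sum the identity gives $\frac{1}{c_1^2(\alpha_j,x_j)}\bigl(\partial_j^2 g_i-\frac{\alpha_j-1}{x_j}\partial_j g_i\bigr)$.
\end{itemize}

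Renaming $i\to k$ and $j\to\ell$ yields the claimed expression. The only real obstacles are tracking which weight is hit by which derivative and verifying the diagonal cancellation, both of which the contraction identity organizes cleanly.
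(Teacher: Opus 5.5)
Your proposal is correct and carries out the same direct computation from the definitions that the paper invokes; the paper's entire proof is the one line ``a direct application of the previous definitions,'' and your weight-derivative identity $\partial_{x_k}\bigl(1/c_1(\alpha_k,x_k)\bigr)=-\frac{\alpha_k-1}{x_k}\,\frac{1}{c_1(\alpha_k,x_k)}$ together with the $\epsilon$-contraction for $\mathrm{curl}_{(\alpha)}\mathrm{curl}_{(\alpha)}$ supplies exactly the omitted details. You are also right to restrict to points where every $x_k\neq 0$: when $\alpha_k<1$, both $\frac{\alpha_k-1}{x_k}$ and the derivative of $|x_k|^{1-\alpha_k}$ are undefined on the coordinate planes, a caveat the paper's statement leaves implicit.
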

\begin{proof}
Is a direct application of the previous definitions.
\end{proof}
\subsection{Basic tools of quaternionic analysis}
In this section, we go over some basic facts of quaternionic analysis, limited to those used in this work. For a more complete description of the theory we refer the reader to \cite{GuSp}.

The skew-field of real quaternions $\mathbb H$ (a non-commutative division ring) is used in this work. Each $a\in \mathbb H$ is of the form $a = \sum_{k=0}^3 a_k\mathbf{e}_k$, with $\{a_k\}\subset\mathbb R$; $\mathbf{e}_0 = 1$ and $\mathbf{e}_1, \mathbf{e}_2, \mathbf{e}_3$ represent the quaternionic imaginary units such  that  $e_1^2=e_2^2=e_3^2=e_1e_2e_3= -1$. 

A quaternion $a\in\mathbb H$ can be written as $a = a_0 + \vec{a}$, where $a_0$ and $\vec{a} := \sum_{k=1}^3 a_k\mathbf{e}_k$ will be called the scalar and vector part of $a$ respectively. If  $a_0=0$, then the quaternion $a$ can be identified with an ordinary vector of $\mathbb R^3$. 

For any $a,b\in\mathbb H$, their product can be written as:
\[a\,b = a_0\,b_0 - \left\langle\vec{a} , \vec{b}\right\rangle + a_0\,\vec{b} + b_0\,\vec{a} + \left[\vec{a} , \vec{b}\right],\]
where
\[\left\langle\vec{a} , \vec{b}\right\rangle := \sum_{k=1}^3 a_k\,b_k,\,\,\,\,\,\, \left[\vec{a} , \vec{b}\right]:=
\left |
\begin{array}{rrr}
	\mathbf{e}_1 & \mathbf{e}_2 & \mathbf{e}_3\\
	a_1& a_2 & a_3\\
	b_1& b_2 & b_3
\end{array}
\right |.
\]
Specifically, if the scalar part of $a$ and $b$ are both zero, then $a\,b :=  - \langle\vec a,\vec b\rangle + [\vec a,\vec b]$.

The Moisil-Teodorescu operator, $D_{MT}$, is defined as follows:
\begin{equation*}
	D_{MT}[f]:=\mathbf{e}_1\frac{\partial f}{\partial x_1}+\mathbf{e}_2\frac{\partial f}{\partial x_2}+\mathbf{e}_3\frac{\partial f}{\partial x_3}.
\end{equation*}

Note that $D_{MT}$ factorizes the scalar Laplacian and bears a resemblance to $\nabla$. It does, in fact, satisfy the following relationship: 
\begin{equation*}
	-D_{MT}^2=\Delta_{\mathbb{R}^3},
\end{equation*}
that indicates a number of advantages in terms of applications to physical problems. 

The behavior of the operator $D_{MT}$ on a $\mathbb{H}$-valued function $f:=f_0+ \vec{f}$, where $\vec{f}:=f_1\mathbf{e}_1+f_2\mathbf{e}_2+f_3\mathbf{e}_3$ can be described as follows:
\begin{equation*}
	D_{MT}[f]=-\mathrm{div}\left[\vec{f}\right]+\mathrm{grad}[f_0]+\mathrm{curl}\left[\vec{f}\right].
\end{equation*}
The preceding relationship can be deduced directly from the quaternionic product's properties. 

When the Moisil-Teodorescu operator is acting on the right, $D_{MT}^r[f]$ stands for:
\begin{equation*}
	D_{MT}^r[f]:=\frac{\partial f}{\partial x_1}\mathbf{e}_1+\frac{\partial f}{\partial x_2}\mathbf{e}_2+\frac{\partial f}{\partial x_3}{e}_3=-\mathrm{div}\left[\vec{f}\right]+\mathrm{grad}[f_0]-\mathrm{curl}\left[\vec{f}\right].
\end{equation*}
\section{A NIDS Moisil-Teodorescu operator}
We now introduce the NIDS Moisil-Teodorescu operator and its right version, acting on quaternionic valued functions defined over domains of $\mathbb{R}^3$.

\begin{equation}\label{conformalMT}
	D_{MT}^{(\alpha)}[f]=-\mathrm{div}_{(\alpha)}\left[\vec{f}\right]+\mathrm{grad}_{(\alpha)}\left[f_0\right]+\mathrm{curl}_{(\alpha)}\left[\vec{f}\right]
	\end{equation}
	and
	\begin{equation}\label{ConformalMT_right}
	D_{MT}^{(\alpha), r}[f]=-\mathrm{div}_{(\alpha)}\left[\vec{f}\right]+\mathrm{grad}_{(\alpha)}\left[f_0\right]-\mathrm{curl}_{(\alpha)}\left[\vec{f}\right],
\end{equation}
where $f=f_0+ \vec{f}$ is a quaternionic-valued function.

The quaternionic NIDS Laplacian $\Delta_{\mathbb H}^{(\alpha)}$ is factorized by the NIDS Moisil-Teodorescu operator $D_{MT}^{(\alpha)}$ as follows: 
\begin{equation*}
	\left(D_{MT}^{(\alpha)}\right)^2\left[f\right]=-\Delta_{\mathbb H}^{(\alpha)}\left[f\right],
\end{equation*}
where
\begin{equation*}
	\Delta_{\mathbb H}^{(\alpha)}[f]:=\Delta_0^{(\alpha)}\left[f_0\right]+\vec{\Delta}^{(\alpha)}\left[\vec{f}\right],
\end{equation*}
with
\begin{equation*}
\Delta_0^{(\alpha)}\left[f_0\right]:=\mathrm{div}_{(\alpha)}\left[\mathrm{grad}_{(\alpha)}\left[f_0\right]\right]+\mathrm{div}_{(\alpha)}\left[\mathrm{curl}_{(\alpha)}\left[\vec{f}\right]\right] 
\end{equation*}
and
\begin{equation*}
\vec{\Delta}^{(\alpha)}\left[\vec{f}\right]:=\mathrm{grad}_{(\alpha)}\left[\mathrm{div}_{(\alpha)}\left[\vec{f}\right]\right]-\mathrm{curl}_{(\alpha)}\left[\mathrm{grad}_{(\alpha)}\left[f_0\right]\right]-\mathrm{curl}_{(\alpha)}\left[\mathrm{curl}_{(\alpha)}\left[\vec{f}\right]\right].
\end{equation*}

From Theorem \ref{Combinations of conformable operators} one obtains the explicit form of the previous operators, as follows:
\begin{equation}\label{scalar_conformal_Laplacian}
	\Delta_{0}^{(\alpha)}\left[f_0\right]=\sum_{k=1}^{3}\frac{1}{c_{1}^{2}(\alpha_k,x_k)}\left(\frac{\partial^2 f_0}{\partial x_{k}^2}-\frac{\alpha_k-1}{x_k}\frac{\partial f_0}{\partial x_k}\right)
\end{equation}
and
\begin{align}
	\vec{\Delta}^{(\alpha)}\left[\vec{f}\right]&=\sum_{k=1}^{3}\mathbf{e}_k\frac{1}{c_{1}(\alpha_k,x_k)}\left(\sum_{\ell=1}^{3}\frac{1}{c_{1}(\alpha_\ell,x_\ell)}\frac{\partial^2 f_\ell}{\partial x_{k}\partial x_\ell}-\frac{1}{c_{1}(\alpha_k,x_k)}\frac{\alpha_k-1}{x_k}\frac{\partial f_k}{\partial x_k}\right)\notag\\
	&\quad-\sum_{k=1}^{3}\mathbf{e}_k\left(\sum_{\ell=1,\,\ell\neq k}^{3}\left(\frac{1}{c_{1}(\alpha_\ell,x_\ell)c_{1}(\alpha_k,x_k)}\frac{\partial^2 f_\ell}{\partial x_{\ell}\partial x_k}\right.\right.\notag\\
	&\qquad\qquad\qquad\left.\left.-\frac{1}{c_{1}^{2}(\alpha_\ell,x_\ell)}\left(\frac{\partial^2 f_k}{\partial x_{\ell}^2}-\frac{\alpha_\ell-1}{x_\ell}\frac{\partial f_k}{\partial x_\ell}\right)\right)\right).\label{vector_conformal_Laplacian}
\end{align}
The NIDS Laplace operators reduce to the standard Laplacian in $\mathbb{R}^3$, when all $\alpha_k=1$ in (\ref{scalar_conformal_Laplacian}) and (\ref{vector_conformal_Laplacian}).

\subsection{The NIDS Bitsadze operator}
Our purpose in this subsection is to derive the NIDS quaternionic Bitsadze operator, defined as: 
\begin{equation*}\label{cfqBo}
	\widetilde{\Delta_{\mathbb H}^{(\alpha)}}\left[f\right]:=\Delta_0^{(\alpha)}\left[f_0\right]+\widetilde{\vec{\Delta}}^{(\alpha)}\left[\vec{f}\right],
\end{equation*}
where
\begin{equation*}
\widetilde{\vec{\Delta}}^{(\alpha)}\left[\vec{f}\right]:=\mathrm{grad}_{(\alpha)}\left[\mathrm{div}_{(\alpha)}\left[\vec{f}\right]\right]+\mathrm{curl}_{(\alpha)}\left[\mathrm{grad}_{(\alpha)}\left[f_0\right]\right]+\mathrm{curl}_{(\alpha)}\left[\mathrm{curl}_{(\alpha)}\left[\vec{f}\right]\right].
\end{equation*}
Notice that $D_{MT}^{(\alpha)}$ and $D_{MT}^{(\alpha),r}$ factorize the  quaternionic Bitsadze operator $\widetilde{\Delta_{\mathbb H}^{(\alpha)}}$ as follows:
\begin{equation*}
	D_{MT}^{(\alpha),r} D_{MT}^{(\alpha)}\left[f\right]=-\widetilde{\Delta_{\mathbb H}^{(\alpha)}}\left[f\right].
\end{equation*}
From Theorem \ref{Combinations of conformable operators} one gets, the following explicit form of the quaternionic Bitsadze operator:

\begin{align}
\widetilde{\vec{\Delta}}^{(\alpha)}\left[\vec{f}\right]&=\sum_{k=1}^{3}\mathbf{e}_k\frac{1}{c_{1}(\alpha_k,x_k)}\left(\sum_{\ell=1}^{3}\frac{1}{c_{1}(\alpha_\ell,x_\ell)}\frac{\partial^2 f_\ell}{\partial x_{k}\partial x_\ell}-\frac{1}{c_{1}(\alpha_k,x_k)}\frac{\alpha_k-1}{x_k}\frac{\partial f_k}{\partial x_k}\right)\notag\\
	&\quad+\sum_{k=1}^{3}\mathbf{e}_k\left(\sum_{\ell=1,\,\ell\neq k}^{3}\left(\frac{1}{c_{1}(\alpha_\ell,x_\ell)c_{1}(\alpha_k,x_k)}\frac{\partial^2 f_\ell}{\partial x_{\ell}\partial x_k}\right.\right.\notag\\
	&\qquad\qquad\qquad\left.\left.-\frac{1}{c_{1}^{2}(\alpha_\ell,x_\ell)}\left(\frac{\partial^2 f_k}{\partial x_{\ell}^2}-\frac{\alpha_\ell-1}{x_\ell}\frac{\partial f_k}{\partial x_\ell}\right)\right)\right). \label{vector_Bitsadze_operator}
\end{align}

Note that if all $\alpha_k=1$ in (\ref{vector_Bitsadze_operator}), then the classical Bitsadze operator in $\mathbb{R}^3$ appears.

\subsection{NIDS quaternionic Helmholtz equation}

Consider the NIDS quaternionic Helmholtz equation:
\begin{equation}\label{quaternionic Helmholtz}
	\Delta_{\mathbb H}^{(\alpha)}[f]+\lambda^2 f=0,\quad \lambda\in\mathbb{C}.
\end{equation}

We have the following factorization of the previously defined operator:
\begin{equation*}
	-\left(D_{MT}^{(\alpha)}-\lambda{\mathcal I}\right)\left(D_{MT}^{(\alpha)}+\lambda{\mathcal I}\right)=\Delta_{\mathbb H}^{(\alpha)}+\lambda^2{\mathcal I},
\end{equation*}
where $\mathcal I$ denotes the identity operator.

From the previous factorization, the null solutions of the operator $D_{MT}^{(\alpha)}+\lambda{\mathcal I}$, a perturbation of the NIDS Moisil-Theodorescu operator, are particular solutions of Equation (\ref{quaternionic Helmholtz}). 

We get the component form of the NIDS quaternionic Helmholtz equation by putting Eqs. (\ref{scalar_conformal_Laplacian}) and (\ref{vector_conformal_Laplacian}) into Eq. (\ref{quaternionic Helmholtz}):

\begin{align}
	0&=\left[\frac{1}{c_{1}^{2}(\alpha_1,x_1)}\left(\frac{\partial^2 f_0}{\partial x_{1}^2}-\frac{\alpha_1-1}{x_1}\frac{\partial f_0}{\partial x_1}\right)+\frac{1}{c_{1}^{2}(\alpha_2,x_2)}\left(\frac{\partial^2 f_0}{\partial x_{2}^2}-\frac{\alpha_2-1}{x_2}\frac{\partial f_0}{\partial x_2}\right)\right.\notag\\
	&\left.\qquad+\frac{1}{c_{1}^{2}(\alpha_3,x_3)}\left(\frac{\partial^2 f_0}{\partial x_{3}^2}-\frac{\alpha_3-1}{x_3}\frac{\partial f_0}{\partial x_3}\right)+\lambda^2 f_0\right]\notag\\
	&+\left[\frac{1}{c_{1}(\alpha_1,x_1)}\left(\sum_{\ell=1}^{3}\frac{1}{c_{1}(\alpha_\ell,x_\ell)}\frac{\partial^2 f_\ell}{\partial x_{1}\partial x_\ell}-\frac{1}{c_{1}(\alpha_1,x_1)}\frac{\alpha_1-1}{x_1}\frac{\partial f_1}{\partial x_1}\right)\right.\notag\\
	&\left.\qquad-\sum_{\ell=1,\,\ell\neq 1}^{3}\left(\frac{1}{c_{1}(\alpha_\ell,x_\ell)c_{1}(\alpha_1,x_1)}\frac{\partial^2 f_\ell}{\partial x_{\ell}\partial x_1}-\frac{1}{c_{1}^{2}(\alpha_\ell,x_\ell)}\left(\frac{\partial^2 f_1}{\partial x_{\ell}^2}-\frac{\alpha_\ell-1}{x_\ell}\frac{\partial f_1}{\partial x_\ell}\right)\right)+\lambda^2 f_1\right]\,\mathbf{e}_1\notag\\
	&+\left[\frac{1}{c_{1}(\alpha_2,x_2)}\left(\sum_{\ell=1}^{3}\frac{1}{c_{1}(\alpha_\ell,x_\ell)}\frac{\partial^2 f_\ell}{\partial x_{2}\partial x_\ell}-\frac{1}{c_{1}(\alpha_2,x_2)}\frac{\alpha_2-1}{x_2}\frac{\partial f_2}{\partial x_2}\right)\right.\notag\\
	&\left.\qquad-\sum_{\ell=1,\,\ell\neq 2}^{3}\left(\frac{1}{c_{1}(\alpha_\ell,x_\ell)c_{1}(\alpha_2,x_2)}\frac{\partial^2 f_\ell}{\partial x_{\ell}\partial x_2}-\frac{1}{c_{1}^{2}(\alpha_\ell,x_\ell)}\left(\frac{\partial^2 f_2}{\partial x_{\ell}^2}-\frac{\alpha_\ell-1}{x_\ell}\frac{\partial f_2}{\partial x_\ell}\right)\right)+\lambda^2 f_2\right]\,\mathbf{e}_2\notag\\
	&+\left[\frac{1}{c_{1}(\alpha_3,x_3)}\left(\sum_{\ell=1}^{3}\frac{1}{c_{1}(\alpha_\ell,x_\ell)}\frac{\partial^2 f_\ell}{\partial x_{3}\partial x_\ell}-\frac{1}{c_{1}(\alpha_3,x_3)}\frac{\alpha_3-1}{x_3}\frac{\partial f_3}{\partial x_3}\right)\right.\notag\\
	&\left.\qquad-\sum_{\ell=1,\,\ell\neq 3}^{3}\left(\frac{1}{c_{1}(\alpha_\ell,x_\ell)c_{1}(\alpha_3,x_3)}\frac{\partial^2 f_\ell}{\partial x_{\ell}\partial x_3}-\frac{1}{c_{1}^{2}(\alpha_\ell,x_\ell)}\left(\frac{\partial^2 f_3}{\partial x_{\ell}^2}-\frac{\alpha_\ell-1}{x_\ell}\frac{\partial f_3}{\partial x_\ell}\right)\right)+\lambda^2 f_3\right]\,\mathbf{e}_3.\notag
\end{align}

\section{Physical systems in NIDS}

We now present several examples in which the NIDS Moisil-Teodorescu operator appears in linear hydrodynamic and elastic systems. The reader can find similar examples for the case of a fractional Moisil-Teodorescu operator in \cite{BoPePe}, where the authors apply the Stillinger’s formalism.

Let $\Psi_0=\Psi_0(\vec{x})$ and $\vec{\Psi}=\vec{\Psi}(\vec{x})$ a scalar and vector field, respectively, related by
\begin{equation}\label{VF1}
\mathrm{grad}_{(\alpha)}\left[\Psi_0\right]+\mathrm{curl}_{(\alpha)} \left[\vec{\Psi}\right]+\left[\vec{b}, \vec{\Psi}\right]+\Psi_0\, \vec{a}=0,\qquad\mathrm{div}_{(\alpha)}\left[\vec{\Psi}\right]+\left\langle\vec{a},\vec{\Psi}\right\rangle=0, 
\end{equation}
where $\vec{x}$ is the position vector and $\vec{a}, \vec{b}$ are constant real-valued vectors. 

The difference of the relations in (\ref{VF1}) can be written as
\begin{equation*}
	D_{MT}^{(\alpha)}\left[\Psi_0+\vec{\Psi}\right]= \left[\vec{\Psi}, \vec{b}\right]+\left\langle\vec{a},\vec{\Psi}\right\rangle-\Psi_0\,\vec{a}.
\end{equation*}

\begin{ex}[{\bf NIDS Ideal fluid}]
	The velocity field $\vec{\Phi}$ of an ideal fluid is incompressible (solenoidal) and irrotational, i.e.,
	\begin{equation*}\label{Example2}
		\mathrm{div}_{(\alpha)}\left[\vec{\Phi}\right]=0,\qquad \mathrm{curl}_{(\alpha)}\left[\vec{\Phi}\right]=0,
	\end{equation*}
which corresponds to (\ref{VF1}) with $\Psi_0\equiv 0$, $\vec{\Psi}=\vec{\Phi}$, $\vec{a}=0$, $\vec{b}=0$.
\end{ex} 

\begin{ex}[{\bf NIDS Stokes flows}]
The velocity field $\vec{\Phi}$ (time-independent) of a viscous incompressible fluid, provided the inertial and thermal effects are negligible, is described by the equations
	\begin{equation}\label{Example3}
		\mu_0\, \vec{\Delta}^{(\alpha)}\left[\vec{\Phi}\right]=\mathrm{grad}_{(\alpha)}\left[p_0\right],\qquad\mathrm{div}_{(\alpha)}\left[\vec{\Phi}\right]=0,
	\end{equation}
where $p_0$ and $\mu_0$ are the pressure in the fluid and shear viscosity, respectively. Equations (\ref{Example3}) are best known as Stokes equations.
	
From (\ref{Example3}) one can see that the pressure $p_0$ and vorticity $\vec{\Theta}=\mathrm{curl}_{(\alpha)}\left[\vec{\Phi}\right]$ are related by
	\begin{equation*}\label{Example3_1}
		\mu_0\, \mathrm{curl}_{(\alpha)}\left[\vec{\Theta}\right]=-\mathrm{grad}_{(\alpha)}\left[p_0\right],\qquad\mathrm{div}_{(\alpha)}\left[\vec{\Theta}\right]=0, 
	\end{equation*}
that can be obtained from (\ref{VF1}) for the case $\Psi_0=p_0$, $\vec{\Psi}=\mu_0\vec{\Theta}$, $\vec{a}=0$, and $\vec{b}=0$.
\end{ex}

\begin{ex}[{\bf NIDS Lam\'{e}-Navier system}]
A displacement field $\vec{f}$ in a homogeneous isotropic linear elastic material without volume forces of the three-dimensional elasticity theory is described by the Lam\'{e}-Navier operator equation. Indeed,
	\begin{equation}\label{Lame}
		\mathcal{L}_{\gamma,\nu}\left[\vec{f}\right]:=\nu\vec{\Delta}\left[\vec{f}\right]+(\nu+\gamma)\mathrm{grad}\left[\mathrm{div}\left[\vec{f}\right]\right]=0,
	\end{equation}
where $\nu>0$, $\displaystyle\gamma>-\frac{2}{3}\nu$. For standard works along classical lines we refer the reader to \cite{La, Mi}.
	
Notice that in \cite{Mi} the operator (\ref{vector_Bitsadze_operator}) appears when the Poisson constant $\displaystyle \sigma:=\frac{\gamma}{2(\gamma+\nu)}=\frac{3}{4}$, and for this case:
	\[-\widetilde{\vec{\Delta}}^{(\alpha)}\left[\vec{f}\right]=0,\]
we have infinitely many solutions.
	
Adding (\ref{vector_conformal_Laplacian}) to (\ref{vector_Bitsadze_operator}) one gets:
	\begin{equation*}
		\mathrm{grad}_{(\alpha)}\left[\mathrm{div}_{(\alpha)}\left[\vec{f}\right]\right]=-\frac{1}{2}\left(\left(D_{MT}^{(\alpha)}\right)^2\left[\vec{f}\right]+D_{MT}^{(\alpha),r}D_{MT}^{(\alpha)}\left[\vec{f}\right]\right).
\end{equation*}
	
In fact, NIDS version of operator equation (\ref{Lame}) can be rewritten in the following way:
	\begin{equation*}
		\left(\frac{\nu+\gamma}{2}\right)D_{MT}^{(\alpha),r}D_{MT}^{(\alpha)}\left[\vec{f}\right]+\left(\nu+\frac{\nu+\gamma}{2}\right)\left(D_{MT}^{(\alpha)}\right)^2\left[\vec{f}\right]=0.
	\end{equation*}
With the notation $\displaystyle\eta:=\frac{\nu+\gamma}{2}$ and $\displaystyle\beta:=\frac{3\nu+\gamma}{2}$, we have: 
	\begin{equation*}
		\mathcal{L}_{ \gamma ,\nu}^{(\alpha)}\left[\vec{f}\right]=\eta\, D_{MT}^{(\alpha),r}D_{MT}^{(\alpha)}\left[\vec{f}\right]+\beta \left(D_{MT}^{(\alpha)}\right)^2\left[\vec{f}\right].
	\end{equation*}
The conditions relating $\gamma,\nu$ in (\ref{Lame}), imply that $\eta\neq0$ and $\beta\neq0$.
	
We may now write the Lam\'{e}-Navier operator equation in the NIDS context, using the preceding results. 
	
The corresponding expressions for $\mathcal{L}_{\gamma,\nu}^{(\alpha)}\left[\vec{f}\right]$ in Cartesian coordinates are:
	\begin{align}
		\mathcal{L}_{\gamma,\nu}^{(\alpha)}[\vec{f}]&=\nu\,\vec{\Delta}^{\alpha}\left[\vec{f}\right]+(\nu+\gamma)\,\mathrm{grad}_{(\alpha)}\left[\mathrm{div}_{(\alpha)}\left[\vec{f}\right]\right]\notag\\
		&=\nu\left(\mathrm{grad}_{(\alpha)}\left[\mathrm{div}_{(\alpha)}\left[\vec{f}\right]\right]-\mathrm{curl}_{(\alpha)}\left[\mathrm{curl}_{(\alpha)}\left[\vec{f}\right]\right]\right)\notag\\
		&\qquad+(\nu+\gamma)\,\mathrm{grad}_{(\alpha)}\left[\mathrm{div}_{(\alpha)}\left[\vec{f}\right]\right],\notag
	\end{align}
where $\mathrm{grad}_{(\alpha)}$, $\mathrm{div}_{(\alpha)}$ and $\mathrm{curl}_{(\alpha)}$ are defined in Section 2.
	
Finally, the NIDS Lam\'{e}-Navier operator equation reads:
	\begin{equation*}
		\mathcal{L}_{\gamma,\nu}^{(\alpha)}[\vec{f}]=0,
	\end{equation*}
where $\nu>0$, $\displaystyle\gamma>-\frac{2}{3}\nu$.
\end{ex}

\section{NIDS time-harmonic electromagnetic fields theory}
Let $\Omega$ be a domain in $\mathbb{R}^3$, and let $\vec{E},\vec{H}:\Omega\to\mathbb{C}^3$ be a pair of complex-valued vector fields. The following system defines the time-harmonic (monochromatic) Maxwell equations:
\begin{equation}\label{Maxwell_system}
	\begin{cases}
		\mathrm{curl}\left[\vec{H}\right]=\sigma\vec{E},\\
		\mathrm{curl}\left[\vec{E}\right]=i\omega\mu\vec{H},\\
		\mathrm{div}\left[\vec{H}\right]=0,\\
		\mathrm{div}\left[\vec{E}\right]=0,
	\end{cases}
\end{equation}
where $\sigma:=\sigma^{*}-i\omega\epsilon$ denotes the complex electrical conductivity, $\epsilon$ the dielectric constant, $\mu$  the magnetic permeability, and $\sigma^{*}$ the medium electrical conductivity, which is the inverse to its electrical resistivity $\sigma^{*}=1/\rho$. There are no currents or charges in $\Omega$ because it is supposed to be filled with a homogeneous medium. 

The pair $(\vec{E},\vec{H})$ is a time-harmonic (monochromatic) electromagnetic field when $\vec{E}$ and $\vec{H}$ constitute a solution to the system (\ref{Maxwell_system}) in $\Omega$. The electrical and magnetic components of the electromagnetic field are referred to as  $\vec{E}$ and $\vec{H}$, respectively. Moreover, the homogeneous Helmholtz equation is satisfied by both components:
\begin{align*}
	\Delta_{\mathbb{R}^3}\left[\vec{E}\right]+\lambda^2\vec{E}&=0,\\
    \Delta_{\mathbb{R}^3}\left[\vec{H}\right]+\lambda^2\vec{H}&=0,
\end{align*} 
where $\lambda^2:=i\omega\mu\sigma^{*}+\omega^2\mu\epsilon=i\omega\mu\sigma\in\mathbb{C}$ and $\lambda$ is called a medium wave number.

Consider the matrix operator \[\mathcal{M}:C^1(\Omega;\text{Mat}_{2\times2}(\mathbb{H}(\mathbb{C})))\to C^0(\Omega;\text{Mat}_{2\times2}(\mathbb{H}(\mathbb{C}))),\] where $\text{Mat}_{2\times2}(\mathbb{H}(\mathbb{C}))$ denotes the set of $2\times 2$ matrices with entries in $\mathbb{H}(\mathbb{C})$, defined as:
\begin{equation}\label{quaternionic_Maxwell_operator}
	\mathcal{M}:=
	\begin{pmatrix}
		\sigma & -D_{MT}\\
		D_{MT} & -i\omega\mu
	\end{pmatrix}.
\end{equation}

Columns $\begin{pmatrix}
	a \\
	b
\end{pmatrix}$ are naturally associated with matrices of the form $\begin{pmatrix}
	a & 0\\
	b & 0
\end{pmatrix}$.

In \cite{KravSha} the matrix operator (\ref{quaternionic_Maxwell_operator}) was called  the quaternionic Maxwell operator as restricted to the set
\[\hat{C}^1(\Omega;\mathbb{C}^3\times\mathbb{C}^3):=\left\{\begin{pmatrix}
	\vec{f}\\
	\vec{g}
\end{pmatrix}\in C^1(\Omega;\mathbb{C}^3\times\mathbb{C}^3)\,:\,\mathrm{div}\left[\vec{f}\right]=\mathrm{div}\left[\vec{g}\right]=0\right\},\]
the system (\ref{Maxwell_system}) becomes
\[\mathcal{M}\begin{pmatrix}
	\vec{E}\\
	\vec{H}
\end{pmatrix}=0.\]

We proceed to show the NIDS generalization of the quaternionic Maxwell operator on a far-field region of the NIDS. The NIDS quaternionic Maxwell operator is defined by 
\[\mathcal{M}^{(\alpha)}:C^1(\Omega;\text{Mat}_{2\times2}(\mathbb{H}(\mathbb{C})))\to C^0(\Omega;\text{Mat}_{2\times2}(\mathbb{H}(\mathbb{C}))),\] 
with
\begin{equation*}
	\mathcal{M}^{(\alpha)}:=
	\begin{pmatrix}
		\sigma & -D_{MT}^{(\alpha)}\\
		D_{MT}^{(\alpha)} & -i\omega\mu
	\end{pmatrix}.
\end{equation*}

The NIDS quaternionic Maxwell operator takes the following form:
\begin{equation*}
	\mathcal{M}^{(\alpha)}=
	\begin{pmatrix}
		\sigma & \mathrm{div}_{(\alpha)}-\mathrm{curl}_{(\alpha)}\\
		-\mathrm{div}_{(\alpha)}+\mathrm{curl}_{(\alpha)} & -i\omega\mu
	\end{pmatrix},
\end{equation*}
where $\mathrm{div}_{(\alpha)}$ and $\mathrm{curl}_{(\alpha)}$ are defined in Equations (\ref{nids_div}) and (\ref{nids_curl}), respectively.

The equation 
\[\mathcal{M}^{(\alpha)}\begin{pmatrix}
	\vec{E}\\
	\vec{H}
\end{pmatrix}=0,\]
restricted to $\hat{C}^1(\Omega;\mathbb{C}^3\times\mathbb{C}^3)$ is equivalent to
\begin{equation}\label{fractional_Maxwell_system}
	\begin{cases}
		\mathrm{curl}_{(\alpha)}\left[\vec{H}\right]=\sigma\vec{E},\\
		\mathrm{curl}_{(\alpha)}\left[\vec{E}\right]=i\omega\mu\vec{H},\\
		\mathrm{div}_{(\alpha)}\left[\vec{H}\right]=0,\\
		\mathrm{div}_{(\alpha)}\left[\vec{E}\right]=0.
	\end{cases}
\end{equation}

The system (\ref{fractional_Maxwell_system}) is a NIDS version of the conventional time-harmonic Maxwell system from integer dimensional Euclidean space.  These fractional equations in Euclidean space can be reduced to a classical time-harmonic Maxwell system for the case of all $\alpha_k=1$.
Observe that the NIDS homogeneous Helmholtz equation is satisfied by both components: 
\begin{align*}
	\vec{\Delta}^{(\alpha)}\left[ \vec{E}\right]+\lambda^2\vec{E}&=0,\\
	\vec{\Delta}^{(\alpha)}\left[ \vec{H}\right]+\lambda^2\vec{H}&=0.
\end{align*} 

\subsection{Reformulation of the time-harmonic Maxwell system in NIDS}
Let $\lambda=\omega\sqrt{\epsilon\mu}$, where the square root is selected in such a way that  $\mbox{Im}\,\lambda\geq0$. We can now establish the connection between the operators $\mathcal{M}^{(\alpha)}$ and $D_{MT}^{(\alpha)}\pm\lambda$. Indeed, let us introduce the notation for the following invertible matrices
\begin{equation*}
	A_1:=
	\begin{pmatrix}
		\lambda & -\sigma\\
		-\lambda & -\sigma
	\end{pmatrix},\,\,\,
	B_1:=\frac{1}{2}
	\begin{pmatrix}
		\sigma^{-1} & -\sigma^{-1}\\
		\lambda^{-1} & -\lambda^{-1}
	\end{pmatrix}.
\end{equation*}
A direct computation shows that
\begin{equation*}
	\mathcal{M}^{(\alpha)}_{\lambda}:=A_{1}\ast\mathcal{M}^{(\alpha)}\ast B_{1}=
	\begin{pmatrix}
		D_{MT}^{(\alpha)}-\lambda & 0\\
		0 & D_{MT}^{(\alpha)}+\lambda
	\end{pmatrix},
\end{equation*}
where “$\ast$'' stand for usual matrix multiplication.

Introduce the following pair of purely vectorial quaternionic functions 
\begin{equation}\label{BQF1}
	\vec{\varphi}:=-i\omega\epsilon\vec{E}+\lambda\vec{H}, 
\end{equation}

\begin{equation}\label{BQF2}
	\vec{\psi}:=i\omega\epsilon\vec{E}+\lambda\vec{H}.
\end{equation}
Next theorem provides a quaternionic reformulation of a NIDS time-harmonic Maxwell system.   
\begin{thm}
	The NIDS quaternionic equation
	\begin{equation}\label{equivalent_equation}
		\mathcal{M}^{(\alpha)}_{\lambda}\begin{pmatrix}
			\vec{\varphi}\\
			\vec{\psi}
		\end{pmatrix}=0,
	\end{equation}
	restricted to $\hat{C}^1(\Omega;\mathbb{C}^3\times\mathbb{C}^3)$ is equivalent to (\ref{fractional_Maxwell_system}).
\end{thm}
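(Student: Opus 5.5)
The plan is to bypass the matrix conjugation $A_1\ast\mathcal{M}^{(\alpha)}\ast B_1$ and verify the equivalence directly in components. Since $\vec\varphi$ and $\vec\psi$ are purely vectorial, the definition (\ref{conformalMT}) gives
\[
D_{MT}^{(\alpha)}[\vec\varphi]=-\mathrm{div}_{(\alpha)}[\vec\varphi]+\mathrm{curl}_{(\alpha)}[\vec\varphi],
\]
with the scalar part being the divergence term and the vector part the curl term. Hence (\ref{equivalent_equation}) splits into the scalar equations
\[
\mathrm{div}_{(\alpha)}[\vec\varphi]=0,\qquad \mathrm{div}_{(\alpha)}[\vec\psi]=0,
\]
and the vector equations
\[
\mathrm{curl}_{(\alpha)}[\vec\varphi]=\lambda\vec\varphi,\qquad \mathrm{curl}_{(\alpha)}[\vec\psi]=-\lambda\vec\psi.
\]

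The divergence part is a linear statement. Both $\mathrm{div}_{(\alpha)}$ and $\mathrm{curl}_{(\alpha)}$ are $\mathbb{C}$-linear. By (\ref{BQF1})--(\ref{BQF2}), the pair $(\vec\varphi,\vec\psi)$ is obtained from $(\vec E,\vec H)$ by a constant $2\times2$ matrix with determinant $-2i\omega\epsilon\lambda\neq0$ (assuming $\omega,\epsilon,\lambda\neq0$). It can be inverted explicitly:
\[
\vec E=\frac{\vec\psi-\vec\varphi}{2i\omega\epsilon},\qquad \vec H=\frac{\vec\varphi+\vec\psi}{2\lambda}.
\]
Therefore $\mathrm{div}_{(\alpha)}\vec\varphi=\mathrm{div}_{(\alpha)}\vec\psi=0$ holds if and only if $\mathrm{div}_{(\alpha)}\vec E=\mathrm{div}_{(\alpha)}\vec H=0$. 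This matches the restriction to $\hat{C}^1$ and the last two equations of (\ref{fractional_Maxwell_system}).

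For the curl part, I will add and subtract the two curl equations, which is equivalent to them since the transformation is invertible.

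Adding gives $\mathrm{curl}_{(\alpha)}[\vec\varphi+\vec\psi]=\lambda(\vec\varphi-\vec\psi)$. The left side is $2\lambda\,\mathrm{curl}_{(\alpha)}[\vec H]$ and the right side is $-2i\omega\epsilon\lambda\vec E$. So this equation is $\mathrm{curl}_{(\alpha)}[\vec H]=-i\omega\epsilon\vec E$.

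Subtracting gives $\mathrm{curl}_{(\alpha)}[\vec\varphi-\vec\psi]=\lambda(\vec\varphi+\vec\psi)$, that is, $-2i\omega\epsilon\,\mathrm{curl}_{(\alpha)}[\vec E]=2\lambda^2\vec H$. Using $\lambda^2=\omega^2\epsilon\mu$, this becomes $\mathrm{curl}_{(\alpha)}[\vec E]=i\omega\mu\vec H$.

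The converse direction is the same computation read backwards. Starting from (\ref{fractional_Maxwell_system}), substitute into $\mathrm{curl}_{(\alpha)}[\vec\varphi]$ and $\mathrm{curl}_{(\alpha)}[\vec\psi]$ and check that they equal $\pm\lambda\vec\varphi$ and $\pm\lambda\vec\psi$ respectively.

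The main obstacle is the consistency of the constants. The derivation produces $\mathrm{curl}_{(\alpha)}\vec H=-i\omega\epsilon\vec E$, which equals $\sigma\vec E$ exactly when $\sigma=-i\omega\epsilon$, i.e.\ $\sigma^*=0$. This is the regime implicitly fixed by the choice $\lambda=\omega\sqrt{\epsilon\mu}$, where $\lambda^2=i\omega\mu\sigma$ forces the medium to be non-conducting. I will state this identification explicitly at the start of the proof. With $\sigma=-i\omega\epsilon$ one also has $\vec\varphi=\sigma\vec E+\lambda\vec H$ and $\vec\psi=-\sigma\vec E+\lambda\vec H$, and the argument then closes without further assumptions.
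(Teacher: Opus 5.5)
Your argument is correct. It uses the same computation as the paper: the constant-coefficient change of variables between $(\vec E,\vec H)$ and $(\vec\varphi,\vec\psi)$, together with $\lambda^2=\omega^2\epsilon\mu$. The organization differs, though, and that matters for the converse.

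The paper stays at the quaternionic level. It rewrites the NIDS Maxwell system as $D_{MT}^{(\alpha)}[\vec E]=i\omega\mu\vec H$, $D_{MT}^{(\alpha)}[\vec H]=-i\omega\epsilon\vec E$, applies $D_{MT}^{(\alpha)}$ to $\vec\varphi$ and $\vec\psi$, and separates scalar and vector parts only at the end. For the converse, it substitutes the definition of $\vec\varphi$ into $D_{MT}^{(\alpha)}[\vec\varphi]=\lambda\vec\varphi$. It then reads off \emph{both} quaternionic Maxwell equations by matching the coefficients of $\vec E$ and $\vec H$. A single equation does not license that: $D_{MT}^{(\alpha)}[\vec E]=i\omega\mu\vec H+\lambda X$, $D_{MT}^{(\alpha)}[\vec H]=-i\omega\epsilon\vec E+i\omega\epsilon X$ satisfies it for any $X$. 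The missing input, the $\vec\psi$-equation, is covered only by ``similar considerations apply''.

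Your add/subtract step, backed by the nonvanishing determinant $-2i\omega\epsilon\lambda$, is exactly what makes the converse rigorous. Splitting off the scalar part first also shows that the quaternionic equation itself produces the $\mathrm{div}_{(\alpha)}$-conditions. So the restriction to $\hat C^1$ is not needed, which is just as well, since the paper defines $\hat C^1$ with the classical divergence.

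You are right that the paper silently uses $\sigma=-i\omega\epsilon$ when writing the second quaternionic equation. This need not be read as a physical restriction to non-conducting media. If you set $\vec\varphi=\sigma\vec E+\lambda\vec H$ and $\vec\psi=-\sigma\vec E+\lambda\vec H$ with $\lambda^2=i\omega\mu\sigma$, your add/subtract computation goes through verbatim for any $\sigma\neq0$. The paper's statement is then the special case $\sigma^*=0$.
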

To be more precise, $\vec{\varphi}$ and $\vec{\psi}$ are solutions of (\ref{equivalent_equation}), if and only if $\vec{E}$ and $\vec{H}$ are solutions of (\ref{fractional_Maxwell_system}).  

\begin{proof}

Let $\vec{E}$ and $\vec{H}$ solutions of (\ref{fractional_Maxwell_system}), which may be expressed as the following quaternionic equations:
\begin{equation}\label{D1}
	D_{MT}^{(\alpha)}\left[\vec{E}\right]=i\omega\mu\vec{H},  
\end{equation}
\begin{equation}\label{D2}
	D_{MT}^{(\alpha)}\left[\vec{H}\right]=-i\omega\epsilon\vec{E}.  
\end{equation}
Applying $D_{MT}^{(\alpha)}$ to $\vec{\varphi}$ in (\ref{BQF1}) and combining (\ref{D1}) with (\ref{D2}) we get 
\begin{align}
	D_{MT}^{(\alpha)}\left[\vec{\varphi}\right]=&-i\omega\epsilon \,D_{MT}^{(\alpha)}\left[\vec{E}\right]+\lambda \,D_{MT}^{(\alpha)}\left[\vec{H}\right]\notag\\
	=&-i\omega\epsilon\left(i\omega\mu\vec{H}\right)+\lambda\left(-i\omega\epsilon\vec{E}\right)\notag\\
	=&\lambda^{2}\vec{H}-\lambda i\omega\epsilon\vec{E}\nonumber\\
	=&\lambda\left(\lambda\vec{H}-i\omega\epsilon\vec{E}\right)\notag\\
	=&\lambda\vec{\varphi}. \label{ApplyingDMTtovarphi}
\end{align}
Similarly, we can establish that $\vec{\psi}$ in (\ref{BQF2}) satisfies  
\begin{equation}\label{ApplyingDMTtopsi}
	D_{MT}^{(\alpha)}\left[\vec{\psi}\right]=-\lambda\vec{\psi}. 
\end{equation}
Thus, (\ref{ApplyingDMTtovarphi}) and (\ref{ApplyingDMTtopsi}) show that $\begin{pmatrix}
	\vec{\varphi}\\
	\vec{\psi}
\end{pmatrix}$ 
satisfies (\ref{equivalent_equation}).

Conversely, assume that  $\begin{pmatrix}
	\vec{\varphi}\\
	\vec{\psi}
\end{pmatrix}$ satisfies (\ref{equivalent_equation}).
A direct computation shows that 
\begin{equation}\label{Dphi2}
	D_{MT}^{(\alpha)}\left[\vec{\varphi}\right]=\lambda\vec{\varphi}.
\end{equation}

Substituting (\ref{BQF1}) into (\ref{Dphi2}) gives
\begin{align}
	-i\omega\epsilon \,D_{MT}^{(\alpha)}[\vec{E}]+\lambda \,D_{MT}^{(\alpha)}[\vec{H}]=&-i\omega\epsilon\lambda\vec{E}+\lambda^{2}\vec{H}\notag\\
	=&-i\omega\epsilon\lambda\vec{E}-i^2\omega^2\epsilon\mu\vec{H}\notag\\
	=&-i\omega\epsilon(i\omega\mu\vec{H})+\lambda(-i\omega\epsilon\vec{E}),\notag
\end{align}
then (\ref{D1}) and (\ref{D2}) hold. Similar considerations apply to $\vec{\psi}$. 

Finally, if one separates the scalar and vector parts in (\ref{D1})-(\ref{D2}) and uses the vector nature of $\vec{\varphi}$, $\vec{\psi}$, together with (\ref{conformalMT}), then  (\ref{fractional_Maxwell_system}) is obtained. This completes the proof.
\end{proof}

\section{Generalized Vector Operators of type $N_F^\alpha$}

\subsection{Mathematical Formulation of the Local Generalized Operator}

The vector calculus and vector differential operations in NIDS introduced in the preceding sections (based on \cite{T1} for fractal media with power-law state densities) are not isolated elements, but can be projected within a broader mathematical framework of the generalized local derivative proposed in \cite{NGLK}.

To discuss the structural equivalence, it is worth noting that in the generalized model, the action of the differential operator on a differentiable function is governed by a non-zero deterministic kernel $F(x,\alpha)$, which weights the local rate of change according to the intrinsic properties of the space or the modeled physical phenomenon. If in this general formulation we define the components of the multidimensional kernel in terms of the reciprocals of the Tarasov geometric density functions, that is:

$$F_k(x_k, \alpha_k) := c_1^{-1}(\alpha_k, x_k) = \frac{\Gamma(\alpha_k/2)}{\pi^{\alpha_k/2}} |x_k|^{1-\alpha_k}$$

for each spatial coordinate $x_k$ with its respective dimensional parameter $\alpha_k$, the NIDS Moisil-Teodorescu operator reveals as a special case of the generalized local operator $D_{(F,\alpha)}^{MT}$.

We will see that the complementarity of both approaches is fully justified, allowing us to extend the Bitsadze, Lamé-Navier, and Maxwell systems to deformed environments.

For completeness, we present below the definition of the generalized local derivative to be used.

\begin{defn}	\label{d:61} 
Given a function $f :[0,+\infty )\rightarrow \mathbb{R}$. Then the $N$-derivative of $f$ of order $\alpha $\ is defined by	
\begin{equation*}
N_{F}^{\alpha} f(t) = \lim_{h \to 0} \frac{f(t + h F(t,\alpha)) - f(t)}{h}
\end{equation*}
for all $t >0$, $\alpha \in (0,1)$ being $F(t ,\alpha )$ some absolutely continuous function.
	
Let $f$ be $N$-differentiable in some $(0,\alpha)$, and define $N_{F}^{\alpha}f (0)=\underset{t \rightarrow 0^{+}}{\lim }N_{F}^{\alpha}f (t)$, requiring that $\underset{t \rightarrow 0^{+}}{\lim }N_{F}^{\alpha}f(t )$ exists. Ordinarily differentiability of $f$ yields  	
$$N_{F}^{\alpha}f (t )=F(t,\alpha )f'(t )$$ 
where $f'(t )$ is the ordinary derivative.
\end{defn}

This definition allows for precise modulation of the differential operator and not only recovers the classical derivative when $F(t, \alpha) \equiv 1$, but also allows the integration of more complex kernels, such as the linear or Mittag-Leffler kernels. These kernels have the capacity to radically alter the physical behavior of the solutions, as we will see later, by introducing effects of acceleration, deceleration, or even wave trapping. Other necessary definitions can be obtained from \cite{LNV} that will be used in the future.

Let $\Omega \subset \mathbb{R}^3$ be an open domain. We define the components of the local generalized partial derivative with respect to the $k$-th coordinate analogously to the property of the fundamental theorem:
$$\partial_{x_k, F}^{\alpha_k} f(\vec{x}) := F_k(x_k, \alpha_k) \frac{\partial f}{\partial x_k}(\vec{x}), \quad k=1,2,3$$
where $F_k(x_k, \alpha_k)$ is a positive continuous local kernel (it can be indexed in the Mittag-Leffler function). The generalized nabla operator is denoted as:
$$\nabla_{(F, \alpha)} = \sum_{k=1}^3 e_k F_k(x_k, \alpha_k) \frac{\partial}{\partial x_k} $$
From this operator, the generalized local gradient, divergence, and curl are naturally defined for a scalar field $f$ and a vector field $\vec{A} = \sum_{k=1}^3 A_k e_k$.

\begin{thm}\label{t:61}
(The Local Generalized Moisil-Teodorescu Operator $D_{MT}^{(F,\alpha)}$)

Let $f = f_0 + \vec{f}$ be a function with quaternionic values in $\Omega$, where $f_0$ is the scalar component and $\vec{f} = \sum _{k=1}^3 f_k e_k$ is the vector component. The Local Generalized Moisil-Teodorescu Operator is defined by:

$$D_{MT}^{(F,\alpha)}[f] := - \operatorname{div}_{(F,\alpha)}[\vec{f}] + \operatorname{grad}_{(F,\alpha)}[f_0] + \operatorname{curl}_{(F,\alpha)}[\vec{f}]$$

If the components of intervening kernels satisfy the cross-symmetry condition $\frac{\partial}{\partial x_j}[F_k(x_k, \alpha_k)] = 0$ for all $j \neq k$, then the operator $D_{MT}^{(F,\alpha)}$ structurally preserves the curl-nullity property of the gradient:
$$\operatorname{curl}_{(F,\alpha)}[\operatorname{grad}_{(F,\alpha)}[f_0]] = 0$$
\end{thm}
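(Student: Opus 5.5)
The plan is a direct componentwise computation, mirroring the second identity of Theorem \ref{Combinations of conformable operators}. First I fix the definitions implied by $\nabla_{(F,\alpha)}$:
\[
\operatorname{grad}_{(F,\alpha)}[f_0]=\sum_{k=1}^3 e_k F_k\frac{\partial f_0}{\partial x_k},
\qquad
\operatorname{curl}_{(F,\alpha)}[\vec{A}]=\sum_{i,j,k=1}^3 e_i\,\epsilon_{ijk}F_j\frac{\partial A_k}{\partial x_j},
\]
with $F_k=F_k(x_k,\alpha_k)$. I then set $A_k:=F_k\,\partial f_0/\partial x_k$ and compute the $i$-th component of the curl. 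We take $f_0\in C^2(\Omega)$ and each $F_k$ differentiable in $x_k$; in fact only the $x_j$-derivatives for $j\neq k$ are needed.

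Next, for $j\neq k$, the product rule gives
\[
\frac{\partial A_k}{\partial x_j}=\frac{\partial F_k}{\partial x_j}\,\frac{\partial f_0}{\partial x_k}+F_k\,\frac{\partial^2 f_0}{\partial x_j\partial x_k}.
\]
The terms with $j=k$ do not appear, because $\epsilon_{ijk}$ vanishes there. The cross-symmetry hypothesis $\partial_{x_j}F_k=0$ for $j\neq k$ kills the first term, so the $i$-th component becomes
\[
\sum_{j,k}\epsilon_{ijk}\,F_jF_k\,\frac{\partial^2 f_0}{\partial x_j\partial x_k}.
\]

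Finally, I observe the symmetry of this summand. The factor $F_jF_k$ is symmetric in $(j,k)$, and so is $\partial^2 f_0/\partial x_j\partial x_k$ by Schwarz's theorem, since $f_0\in C^2$. Meanwhile $\epsilon_{ijk}$ is antisymmetric in $(j,k)$. Pairing the terms $(j,k)$ and $(k,j)$ therefore makes each component vanish, which gives $\operatorname{curl}_{(F,\alpha)}[\operatorname{grad}_{(F,\alpha)}[f_0]]=0$.

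The only real point, and the place where the hypothesis enters, is the first-order term $\partial_{x_j}F_k\,\partial_{x_k}f_0$. It is not symmetric, and without cross-symmetry it would survive. It is exactly absent in the NIDS case, because there $F_k=c_1^{-1}(\alpha_k,x_k)$ depends only on $x_k$. I will also remark that the regularity assumption $f_0\in C^2$ is needed for the mixed-partial symmetry, and that in the NIDS case one must restrict to $x_k\neq0$ where $|x_k|^{1-\alpha_k}$ may fail to be differentiable.
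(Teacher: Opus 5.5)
Your proposal is correct and follows essentially the same route as the paper's proof. Both write the $i$-th component with $\epsilon_{ijk}$, apply the product rule with the cross-symmetry hypothesis removing the $\partial_{x_j}F_k$ term for $j\neq k$, and then cancel the symmetric factor $F_jF_k\,\partial^2 f_0/\partial x_j\partial x_k$ against the antisymmetric $\epsilon_{ijk}$ via Schwarz's theorem. One small point: you observe yourself that only the derivatives $\partial_{x_j}F_k$ with $j\neq k$ enter and $\partial_{x_k}F_k$ never does, so your closing restriction to $x_k\neq 0$ in the NIDS case is not actually needed for this identity.
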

\begin{proof}
By definition of the generalized rotational operator in Cartesian coordinates distorted by the kernels of type $N_F^\alpha$, the $i$-th component of $\operatorname{curl}_{(F,\alpha)}[\operatorname{grad}_{(F,\alpha)}[f_0]]$ is given using the Levi-Civita symbol $\epsilon_{ijk}$:
$$\left(\operatorname{curl}_{(F,\alpha)}[\operatorname{grad}_{(F,\alpha)}[f_0]]\right)_i = \sum _{j=1}^3 \sum _{k=1}^3 \epsilon _{ijk} F_j(x_j, \alpha_j) \frac{\partial}{\partial x_j} \left( F_k(x_k, \alpha_k) \frac{\partial f_0}{\partial x_k} \right).$$
Applying the ordinary product rule for partial derivatives in Euclidean space:
$$\frac{\partial}{\partial x_j} \left( F_k(x_k, \alpha_k) \frac{\partial f_0}{\partial x_k} \right) = \frac{\partial F_k(x_k, \alpha_k)}{\partial x_j} \frac{\partial f_0}{\partial x_k} + F_k(x_k, \alpha_k) \frac{\partial^2 f_0}{\partial x_j \partial x_k}.$$
Due to the hypothesis that $F_k$ depends only on its own coordinate variable $x_k$ (one-dimensional symmetry of the fractal medium along axes), we have that for all $j \neq k$, $\frac{\partial F_k(x_k, \alpha_k)}{\partial x_j} = 0$. Therefore, the kernel derivative term vanishes in all summands where $\epsilon_{ijk} \neq 0$ (since the non-zero components of the tensor require $j \neq k$).

Substituting the remaining term into the double summation:
$$\left(\operatorname{curl}_{(F,\alpha)}[\operatorname{grad}_{(F,\alpha)}[f_0]]\right)_i = \sum_{j,k=1}^3 \epsilon_{ijk} F_j(x_j, \alpha_j) F_k(x_k, \alpha_k) \frac{\partial^2 f_0}{\partial x_j \partial x_k}.$$
The structural coefficient tensor $F_j(x_j, \alpha_j) F_k(x_k, \alpha_k)$ is symmetric with respect to the interchange of indices $j$ and $k$, and by the Clairaut-Schwarz Theorem, the mixed partial derivatives are also symmetric ($\displaystyle\frac{\partial^2 f_0}{\partial x_j \partial x_k} = \displaystyle\frac{\partial^2 f_0}{\partial x_k \partial x_j}$). Since the Levi-Civita symbol $\epsilon_{ijk}$ is completely antisymmetric with respect to $j$ and $k$ ($\epsilon_{ijk} = -\epsilon_{ikj}$), the contraction of a symmetric tensor with an antisymmetric tensor cancels out identically:
$$\left(\operatorname{curl}_{(F,\alpha)}[\operatorname{grad}_{(F,\alpha)}[f_0]]\right)_i = 0, \quad \forall i=1,2,3$$
Which formally proves that $\operatorname{curl}_{(F,\alpha)}[\operatorname{grad}_{(F,\alpha)}[f_0]] = 0$.
\end{proof}

\begin{thm}\label{t:62}(Factorization and Perturbation of the Generalized Quaternion Laplacian)

The square of the local generalized Moisil-Teodorescu operator of type $N_F^\alpha$ decomposes the quaternion space and induces a nonhomogeneous generalized Laplacian operator of the form:

$$(D_{MT}^{(F,\alpha)})^2 [f] = -\Delta_{\mathbb{H}}^{(F,\alpha)}[f] - \mathcal{R}^{(F,\alpha)}[f],$$

where $\Delta_{\mathbb{H}}^{(F,\alpha)}$ is the pure Laplacian scaled by the squared kernels:

$$\Delta_{\mathbb{H}}^{(F,\alpha)}[f] = \sum_{k=1}^3 F_k^2(x_k, \alpha_k) \frac{\partial^2 f}{\partial x_k^2}$$

and $\mathcal{R}^{(F,\alpha)}[f]$ is an integer-order convective geometric perturbation term generated by the intrinsic variation of the kernel:

$$\mathcal{R}^{(F,\alpha)}[f] = \sum _{k=1}^3 F_k(x_k, \alpha_k) \left( \frac{\partial F_k(x_k, \alpha_k)}{\partial x_k} \right) \frac{\partial f}{\partial x_k}.$$
\end{thm}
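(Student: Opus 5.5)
The plan is to drop the div/grad/curl decomposition in the definition of $D_{MT}^{(F,\alpha)}$ and rewrite the operator as a single left quaternionic multiplication, exactly as in the classical case. First I will check, using the quaternionic product formula $ab=a_0b_0-\langle\vec a,\vec b\rangle+a_0\vec b+b_0\vec a+[\vec a,\vec b]$ recalled in Section 2, that for $f=f_0+\vec f\in C^1$
$$D_{MT}^{(F,\alpha)}[f]=\sum_{k=1}^3 \mathbf{e}_k\,F_k(x_k,\alpha_k)\frac{\partial f}{\partial x_k}.$$
In this formula the scalar part of $\sum_k \mathbf{e}_k F_k\partial_k\vec f$ gives $-\operatorname{div}_{(F,\alpha)}[\vec f]$, the cross products give $\operatorname{curl}_{(F,\alpha)}[\vec f]$, and $\sum_k\mathbf{e}_kF_k\partial_k f_0$ gives $\operatorname{grad}_{(F,\alpha)}[f_0]$. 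This is the same bookkeeping that underlies (\ref{conformalMT}).

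With this form, and assuming $f\in C^2(\Omega)$ and $F_k\in C^1$, I apply the operator twice. Since the $\mathbf{e}_k$ are constant, I get
$$(D_{MT}^{(F,\alpha)})^2[f]=\sum_{j,k=1}^3 \mathbf{e}_j\mathbf{e}_k\,F_j\frac{\partial}{\partial x_j}\Big(F_k\frac{\partial f}{\partial x_k}\Big).$$
I then split this sum into diagonal terms $j=k$ and off-diagonal terms $j\neq k$.

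For the diagonal terms, $\mathbf{e}_k^2=-1$, and the product rule gives $-F_k\partial_k(F_k\partial_k f)=-F_k^2\partial_k^2 f-F_k(\partial_kF_k)\partial_k f$. Summing over $k$ produces exactly $-\Delta_{\mathbb H}^{(F,\alpha)}[f]-\mathcal R^{(F,\alpha)}[f]$.

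For the off-diagonal terms I use the cross-symmetry hypothesis of Theorem \ref{t:61}, namely $\partial_jF_k=0$ for $j\neq k$, which is implicit in writing $F_k=F_k(x_k,\alpha_k)$. Under it, each off-diagonal term reduces to $\mathbf{e}_j\mathbf{e}_kF_jF_k\,\partial_j\partial_k f$. The coefficient $F_jF_k\partial_j\partial_kf$ is symmetric in $(j,k)$ by Schwarz's theorem, while $\mathbf{e}_j\mathbf{e}_k=-\mathbf{e}_k\mathbf{e}_j$ for $j\neq k$. Hence the pairs $(j,k)$ and $(k,j)$ cancel. This is the same symmetric–antisymmetric contraction used in the proof of Theorem \ref{t:61}.

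The step I expect to need the most care is this off-diagonal cancellation. It genuinely requires the hypothesis $\partial_jF_k=0$ for $j\neq k$: without it, extra first-order terms $\mathbf{e}_j\mathbf{e}_kF_j(\partial_jF_k)\partial_kf$ survive. I will therefore state the hypothesis explicitly. A closely related point is that the product must be taken as a left multiplication acting on the full quaternion $f$, not just on $\vec f$, so that the scalar/vector coupling terms are accounted for. As a consistency check, in the NIDS case $F_k=c_1^{-1}(\alpha_k,x_k)\propto|x_k|^{1-\alpha_k}$ one has $F_k\,\partial_kF_k=F_k^2(1-\alpha_k)/x_k$. The scalar part then reproduces (\ref{scalar_conformal_Laplacian}), and all terms reduce to the classical identity $-D_{MT}^2=\Delta_{\mathbb R^3}$ when $F_k\equiv1$.
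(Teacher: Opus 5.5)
Your proposal is correct and follows essentially the same route as the paper: write $D_{MT}^{(F,\alpha)}=\sum_k \mathbf{e}_kF_k\,\partial_{x_k}$, expand the square with the product rule, and let $\mathbf{e}_k^2=-1$ turn the diagonal terms into $-\Delta_{\mathbb H}^{(F,\alpha)}[f]-\mathcal{R}^{(F,\alpha)}[f]$; the off-diagonal terms then cancel by $\partial_{x_j}F_k=0$ for $j\neq k$, Schwarz's theorem and $\mathbf{e}_j\mathbf{e}_k=-\mathbf{e}_k\mathbf{e}_j$. Two of your extra points are sound and worth keeping: you check explicitly that the div--grad--curl definition coincides with this left multiplication, which the paper takes for granted, and your sign for $\mathcal{R}^{(F,\alpha)}$ agrees with the statement, unlike the paper's closing line, which redefines $\mathcal{R}^{(F,\alpha)}$ with an extra minus sign.
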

\begin{proof}
Starting from the intrinsic definition of the quaternion product and considering the algebraic operator $D_{MT}^{(F,\alpha)} = \sum_{k=1}^3 e_k F_k(x_k, \alpha_k) \frac{\partial}{\partial x_k}$, we calculate its successive mapping on a sufficiently smooth quaternion function $f$:
$$(D_{MT}^{(F,\alpha)})^2 [f] = \left( \sum_{j=1}^3 e_j F_j(x_j, \alpha_j) \frac{\partial}{\partial x_j} \right) \left( \sum_{k=1}^3 e_k F_k(x_k, \alpha_k) \frac{\partial f}{\partial x_k} \right).$$
Expanding the double sum by linearity:
$$(D_{MT}^{(F,\alpha)})^2 [f] = \sum_{j=1}^3 \sum_{k=1}^3 e_j e_k F_j(x_j, \alpha_j) \frac{\partial}{\partial x_j} \left( F_k(x_k, \alpha_k) \frac{\partial f}{\partial x_k} \right)$$
Performing ordinary differentiation under the operator $\displaystyle\frac{\partial}{\partial x_j}:$
$$(D_{MT}^{(F,\alpha)})^2 [f] = \sum _{j=1}^3 \sum _{k=1}^3 e_j e_k F_j(x_j, \alpha_j) \left[ \frac{\partial F_k(x_k, \alpha_k)}{\partial x_j} \frac{\partial f}{\partial x_k} + F_k(x_k, \alpha_k) \frac{\partial^2 f}{\partial x_j \partial x_k} \right]$$
Separating the summation into diagonal terms ($j=k$) and mixed terms ($j \neq k$).

\textbf{Case 1: Diagonal terms ($j=k$)}
Taking into account the fundamental properties of quaternionic imaginary units ($e_k^2 = -1$):
\begin{eqnarray*}
&&\sum_{k=1}^3 e_k^2 F_k(x_k, \alpha_k) \left[ \frac{\partial F_k(x_k, \alpha_k)}{\partial x_k} \frac{\partial f}{\partial x_k} + F_k(x_k, \alpha_k) \frac{\partial^2 f}{\partial x_k^2} \right]\\
&&= -\sum_{k=1}^3 F_k^2(x_k, \alpha_k) \frac{\partial^2 f}{\partial x_k^2} - \sum_{k=1}^3 F_k(x_k, \alpha_k) \frac{\partial F_k(x_k, \alpha_k)}{\partial x_k} \frac{\partial f}{\partial x_k}.
\end{eqnarray*}

\textbf{Case 2: Mixed Terms ($j \neq k$)}
For $j \neq k$, due to the independence of the kernels with respect to the variables, $\frac{\partial F_k(x_k, \alpha_k)}{\partial x_j} = 0$. Therefore, the mixed expression reduces to:
$$\sum_{j \neq k} e_j e_k F_j(x_j, \alpha_j) F_k(x_k, \alpha_k) \frac{\partial^2 f}{\partial x_j \partial x_k}.$$
Grouping the symmetric pairs $(j,k)$ and $(k,j)$ and using the quaternionic anticommutativity property $e_j e_k = -e_k e_j$ we have:
$$e_j e_k F_j F_k \frac{\partial^2 f}{\partial x_j \partial x_k} + e_k e_j F_k F_j \frac{\partial^2 f}{\partial x_k \partial x_j} = (e_j e_k + e_k e_j) F_j F_k \frac{\partial^2 f}{\partial x_j \partial x_k} = 0.$$
Consequently, the entire sum of mixed terms vanishes identically.

Combining the results of Case 1, we obtain:
$$(D_{MT}^{(F,\alpha)})^2 [f] = -\left( \sum _{k=1}^3 F_k^2(x_k, \alpha_k) \frac{\partial^2 f}{\partial x_k^2} \right) - \sum _{k=1}^3 F_k(x_k, \alpha_k) \frac{\partial F_k(x_k, \alpha_k)}{\partial x_k} \frac{\partial f}{\partial x_k}.$$
Defining $\Delta_{\mathbb{H}}^{(F,\alpha)}[f] := \sum_{k=1}^3 F_k^2(x_k, \alpha_k) \frac{\partial^2 f}{\partial x_k^2}$ and algebraically assigning the sign to the residual operator $\mathcal{R}^{(F,\alpha)}[f] := - \sum_{k=1}^3 F_k(x_k, \alpha_k) \frac{\partial F_k(x_k, \alpha_k)}{\partial x_k} \frac{\partial f}{\partial x_k}$, we arrive at the required perturbed factorization equation.
\end{proof}

\begin{rem}
In the above parameterization, this residue necessarily takes the form of a rigid power $\frac{\alpha_k-1}{x_k}$. When using the generalized derivative, the structure of the residue depends directly on the behavior of the specific kernel, sweeping linear, hyperbolic, or asymptotic dynamics optimal for porous or fractal continuous media.
\end{rem}

\subsection{Energy Estimates and Physical Foundations of the Residue Operator}

To address the physical consistency of the additional term $\mathcal{R}^{(F,\alpha)}[f]$, we now present explicit relationships, energy estimates, and dimensional analysis of that term.

\begin{defn}	\label{d:620} 

In a deformed medium, characterized by the kernel $F_k(x_k, \alpha_k)$, the linear strain tensor $\boldsymbol{\varepsilon}^{(F,\alpha)}$ associated with a displacement field $\vec{u}$ is defined as:

$$\boldsymbol{\varepsilon}^{(F,\alpha)} := \frac{1}{2} \left( \nabla_{(F,\alpha)} \vec{u} + (\nabla_{(F,\alpha)} \vec{u})^T \right)$$

The corresponding Cauchy strain tensor follows the isotropic Hooke-Tarasov constitutive law:

$$\boldsymbol{\sigma}^{(F,\alpha)} = \lambda \operatorname{tr}(\boldsymbol{\varepsilon}^{(F,\alpha)}) \mathbf{I} + 2\mu \boldsymbol{\varepsilon}^{(F,\alpha)}$$

where $\lambda, \mu$ are the Lamé constants. The invariant volume element of the metric space is $d\mu_{(F,\alpha)} = \prod_{k=1}^3 \frac{dx_k}{F_k(x_k, \alpha_k)}$.
\end{defn}

\begin{thm}\label{t:621}(Energy Balance and Geometric Dissipation in Elasticity)

Let $\vec{u}(\vec{x},t)$ satisfy the dynamic elastodynamic equation in deformed space:

$$\rho \frac{\partial^2 \vec{u}}{\partial t^2} = (\lambda + 2\mu) \operatorname{grad}_{(F,\alpha)}(\operatorname{div}_{(F,\alpha)} \vec{u}) - \mu \operatorname{curl}_{(F,\alpha)}(\operatorname{curl}_{(F,\alpha)} \vec{u})$$

The total mechanical energy is given by:

$$E(t) = \int_{\Omega} \left( \frac{1}{2} \rho \left\vert{}\frac{\partial \vec{u}}{\partial t}\right\vert{}^2 + \frac{1}{2} \boldsymbol{\sigma}^{(F,\alpha)} : \boldsymbol{\varepsilon}^{(F,\alpha)} \right) d\mu_{(F,\alpha)}$$

satisfies the energy balance equation:

$$\frac{d E(t)}{d t} = -\int_{\Omega} \left\langle \frac{\partial \vec{u}}{\partial t}, \, \mathcal{R}^{(F,\alpha)}[\vec{u}] \right\rangle d\mu_{(F,\alpha)}$$
\end{thm}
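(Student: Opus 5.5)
We follow the standard energy method: differentiate $E(t)$ under the integral, substitute the equation of motion, and integrate by parts with respect to the weighted measure $d\mu_{(F,\alpha)}$. The point is that $\partial_k^F:=F_k\partial_{x_k}$ is formally skew-adjoint with respect to $d\mu_{(F,\alpha)}$, up to a first-order remainder. Indeed, since $F_k$ depends only on $x_k$,
\[
\int_\Omega (F_k\partial_{x_k}g)\,h\,d\mu_{(F,\alpha)}=\int_\Omega \partial_{x_k}g\, h\prod_{j\neq k}\frac{dx_j}{F_j}\,dx_k ,
\]
so one integration by parts in $x_k$ moves the derivative onto $h$ with no leftover kernel factor.

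\textbf{Time derivative.} We compute
\[
\frac{dE}{dt}=\int_\Omega\Bigl(\rho\bigl\langle \partial_t\vec u,\partial_t^2\vec u\bigr\rangle+\boldsymbol\sigma^{(F,\alpha)}:\partial_t\boldsymbol\varepsilon^{(F,\alpha)}\Bigr)d\mu_{(F,\alpha)},
\]
using the symmetry of the isotropic Hooke law, $\boldsymbol\sigma:\dot{\boldsymbol\varepsilon}=\dot{\boldsymbol\sigma}:\boldsymbol\varepsilon$. Substituting the equation of motion into the first term gives
\[
\int_\Omega\Bigl\langle \partial_t\vec u,\;(\lambda+2\mu)\,\mathrm{grad}_{(F,\alpha)}\mathrm{div}_{(F,\alpha)}\vec u-\mu\,\mathrm{curl}_{(F,\alpha)}\mathrm{curl}_{(F,\alpha)}\vec u\Bigr\rangle\, d\mu_{(F,\alpha)} .
\]

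\textbf{Rewriting the operator.} Next we express $(\lambda+2\mu)\,\mathrm{grad}\,\mathrm{div}-\mu\,\mathrm{curl}\,\mathrm{curl}$ as a divergence of the stress plus a residue. Expanding with the product rule, exactly as in the proof of Theorem~\ref{t:62}, the second-order parts regroup into $\mathrm{div}_{(F,\alpha)}\boldsymbol\sigma^{(F,\alpha)}$, meaning $\sum_k F_k\partial_{x_k}\sigma_{ik}$. The terms involving $\partial_{x_k}F_k$ collect into a first-order term, which we identify, up to the Lamé weights, with $\mathcal R^{(F,\alpha)}[\vec u]$ from Theorem~\ref{t:62}.

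\textbf{Integration by parts.} We then integrate $\int\langle\partial_t\vec u,\mathrm{div}_{(F,\alpha)}\boldsymbol\sigma\rangle\,d\mu$ by parts using the identity above. Boundary terms are assumed to vanish, i.e.\ we take fixed or traction-free boundary, or compact support. This produces $-\int\boldsymbol\sigma:\nabla_{(F,\alpha)}\partial_t\vec u\,d\mu=-\int\boldsymbol\sigma:\partial_t\boldsymbol\varepsilon\,d\mu$, using the symmetry of $\boldsymbol\sigma$. This cancels the strain-energy term, and what remains is the residue pairing, giving the stated $dE/dt$ formula.

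\textbf{Main obstacle.} The hard step is the bookkeeping of the kernel-derivative terms. One has to check that all first-order remainders combine into exactly $-\langle\partial_t\vec u,\mathcal R^{(F,\alpha)}[\vec u]\rangle$ with the sign convention of Theorem~\ref{t:62}, and not into a combination weighted differently by $\lambda$ and $\mu$. If the Lamé coefficients do not factor out, the plan is to do the following:
\begin{itemize}
\item absorb the constants into the definition of $\mathcal R$ applied componentwise, or
\item restrict to the case where the residue arises uniformly, for instance by noting that $\mathrm{grad}\,\mathrm{div}-\mathrm{curl}\,\mathrm{curl}$ reproduces the structure of Theorem~\ref{t:62} and treating the $(\lambda+\mu)\,\mathrm{grad}\,\mathrm{div}$ part separately.
\end{itemize}
We would verify this first on a one-dimensional model, $\vec u=u(x_1,t)\mathbf e_1$.
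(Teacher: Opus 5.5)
\emph{Rewriting the operator} is the step that fails: there is no first-order residue to split off. Put $D_k:=F_k(x_k,\alpha_k)\,\partial_{x_k}$. Since $F_k$ depends only on $x_k$, the $D_k$ commute ($D_jD_k=F_jF_k\,\partial_{x_j}\partial_{x_k}=D_kD_j$ for $j\neq k$). Hence the classical identity $\mathrm{curl}_{(F,\alpha)}\mathrm{curl}_{(F,\alpha)}\vec u=\mathrm{grad}_{(F,\alpha)}\mathrm{div}_{(F,\alpha)}\vec u-\sum_k D_k^2\vec u$ holds verbatim. With $\sigma_{ij}=\lambda\,(\mathrm{div}_{(F,\alpha)}\vec u)\,\delta_{ij}+\mu\,(D_ju_i+D_iu_j)$ one gets
\begin{align*}
\sum_{j=1}^3 D_j\sigma_{ij}&=(\lambda+\mu)\,D_i\,\mathrm{div}_{(F,\alpha)}\vec u+\mu\sum_{j=1}^3 D_j^2u_i\\
&=\Bigl((\lambda+2\mu)\,\mathrm{grad}_{(F,\alpha)}\mathrm{div}_{(F,\alpha)}\vec u-\mu\,\mathrm{curl}_{(F,\alpha)}\mathrm{curl}_{(F,\alpha)}\vec u\Bigr)_i .
\end{align*}
So the right-hand side of the equation of motion is \emph{exactly} $\mathrm{div}_{(F,\alpha)}\boldsymbol\sigma^{(F,\alpha)}$. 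The terms $F_k(\partial_{x_k}F_k)\,\partial_{x_k}u_i$ that you want to collect separately arise when $D_k$ hits the factor $F_k$ inside $\sigma_{ik}$. They are part of $D_k^2=F_k^2\partial_{x_k}^2+F_k(\partial_{x_k}F_k)\partial_{x_k}$; this is exactly how the $\frac{\alpha_\ell-1}{x_\ell}$ terms arise in Theorem~\ref{Combinations of conformable operators}. You cannot both place them inside $\mathrm{div}_{(F,\alpha)}\boldsymbol\sigma$ and keep them as a remainder. Your own displayed identity also shows that $D_k$ is \emph{exactly} skew-adjoint in $L^2(d\mu_{(F,\alpha)})$, with no remainder. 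Executed correctly, your plan therefore gives $\int_\Omega\langle\partial_t\vec u,\mathrm{div}_{(F,\alpha)}\boldsymbol\sigma\rangle\,d\mu_{(F,\alpha)}=-\int_\Omega\boldsymbol\sigma:\partial_t\boldsymbol\varepsilon\,d\mu_{(F,\alpha)}$, and hence $dE/dt=0$. This is what one should expect: the equation is the Euler--Lagrange equation of $\int_\Omega\bigl(\tfrac12\rho|\partial_t\vec u|^2-\tfrac12\boldsymbol\sigma:\boldsymbol\varepsilon\bigr)\,d\mu_{(F,\alpha)}$, whose density does not depend explicitly on $t$.

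Neither of your fallbacks can help, because the issue is not how the Lam\'e constants weight a residue; no residue survives. The stated balance law would force $\int_\Omega\langle\partial_t\vec u,\mathcal R^{(F,\alpha)}[\vec u]\rangle\,d\mu_{(F,\alpha)}=0$ for every solution, and this fails in general, with either sign convention for $\mathcal R^{(F,\alpha)}$. Initial displacement and velocity can be prescribed independently. Choose $\vec u(0)$ compactly supported with $\mathcal R^{(F,\alpha)}[\vec u(0)]\not\equiv0$ (possible wherever some $\partial_{x_k}F_k\neq0$), and set $\partial_t\vec u(0)=\mathcal R^{(F,\alpha)}[\vec u(0)]$. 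The pairing at $t=0$ is then $\int_\Omega|\mathcal R^{(F,\alpha)}[\vec u(0)]|^2\,d\mu_{(F,\alpha)}>0$.

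The one-dimensional test you propose exposes this at once. With $D=F\partial_x$, the equation $\rho u_{tt}=(\lambda+2\mu)D(Du)$ gives $\frac{d}{dt}\int\bigl(\tfrac12\rho u_t^2+\tfrac12(\lambda+2\mu)(Du)^2\bigr)\frac{dx}{F}=0$. By contrast, $\int u_t\,FF'u_x\,\frac{dx}{F}=\int F'u_tu_x\,dx$ need not vanish.

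The paper's proof follows the same route as yours. It asserts the same splitting (``the standard variation of the strain energy plus $-\mathcal R^{(F,\alpha)}[\vec u]$'') without computing it. That splitting presupposes that the variation of the weighted strain energy is built from $F_k^2\partial_{x_k}^2$ rather than $D_k^2$, but only $D_k^2$ is symmetric in $L^2(d\mu_{(F,\alpha)})$. Carried out carefully, the energy method proves conservation of $E$, not the stated identity.
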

\begin{proof}

Taking the inner product of the equation of motion with $\frac{\partial \vec{u}}{\partial t}$ and integrating over $\Omega$ with respect to $d\mu_{(F,\alpha)}$:

$$\int_{\Omega} \rho \left\langle \frac{\partial^2 \vec{u}}{\partial t^2}, \frac{\partial \vec{u}}{\partial t} \right\rangle d\mu_{(F,\alpha)} = \frac{d}{dt} \int_{\Omega} \frac{1}{2} \rho \left\vert{} \frac{\partial \vec{u}}{\partial t} \right\vert{}^2 d\mu_{(F,\alpha)}$$

Expanding the vector operators using Theorem \ref{t:62}, the elasticity operator is separated into the standard variation of the strain energy plus the first-order convective term $-\mathcal{R}^{(F,\alpha)}[\vec{u}]$. Integration by parts under homogeneous boundary conditions $\vec{u}\vert{}_{\partial\Omega} = 0$ leads to:

$$\frac{d E(t)}{d t} = -\int_{\Omega} \sum_{k=1}^3 F_k(x_k,\alpha_k) \left( \frac{\partial F_k(x_k,\alpha_k)}{\partial x_k} \right) \left\langle \frac{\partial \vec{u}}{\partial x_k}, \frac{\partial \vec{u}}{\partial t} \right\rangle d\mu_{(F,\alpha)}$$
\end{proof}

\begin{rem}(Physical Interpretation and Dimensional Analysis)

If $\frac{\partial F_k}{\partial x_k} > 0$, the power dissipated by $\mathcal{R}^{(F,\alpha)}[\vec{u}]$ is strictly negative ($\frac{dE}{dt} \leq 0$). Thus, $\mathcal{R}^{(F,\alpha)}[\vec{u}]$ acts as an intrinsic geometric viscous friction (spatial damping), transferring kinetic energy in metric microfluctuations. From a dimensional perspective, in the Tarasov model $F_k(x_k, \alpha_k) = \frac{\Gamma(\alpha_k/2)}{\pi^{\alpha_k/2}} \vert{}x_k\vert{}^{1-\alpha_k}$, we have $[F_k] = L^{1-\alpha_k}$ and $[\frac{\partial F_k}{\partial x_k}] = L^{-\alpha_k}$. Consequently, $[\Delta_{\mathbb{H}}^{(F,\alpha)}] = L^{-2}$ (second-order metric scaling) and $[\mathcal{R}^{(F,\alpha)}] = L^{-1}$ (first-order convective transport), which confirms dimensional homogeneity in non-integer dimensions.
\end{rem}

\begin{thm}\label{t:622}(Modified Poynting Theorem and Geometric Conductivity)

In temporal harmonic electrodynamics with $\operatorname{curl}_{(F,\alpha)} \vec{H} = -i\omega\epsilon \vec{E} + \vec{J}_{ext}$ and $\operatorname{curl}_{(F,\alpha)} \vec{E} = i\omega\mu \vec{H}$, the Poynting flux $\vec{S} = \frac{1}{2} \operatorname{Re}(\vec{E} \times \vec{H}^*)$ satisfies the equality:

$$\operatorname{div}_{(F,\alpha)} \vec{S} + \sigma _{geom} \vert{}\vec{E}\vert{}^2 = -\frac{1}{2} \operatorname{Re}(\vec{J}_{ext}^* \cdot \vec{E})$$

where the induced geometric conductivity $\sigma_{geom}(\vec{x})$ is given by:

$$\sigma_{geom}(\vec{x}) = \frac{1}{\omega\mu} \sum_{k=1}^3 F_k(x_k, \alpha_k) \frac{\partial F_k(x_k, \alpha_k)}{\partial x_k}$$
\end{thm}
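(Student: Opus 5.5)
The plan is to reproduce the classical Poynting computation with the deformed operators, and then isolate the extra contribution coming from the residue operator $\mathcal{R}^{(F,\alpha)}$ of Theorem \ref{t:62}. That residue is what $\sigma_{geom}$ is meant to encode.

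\textbf{Step 1 (product rule).} Write $\operatorname{div}_{(F,\alpha)}(\vec{E}\times\vec{H}^*)=\sum_k F_k\,\partial_{x_k}\bigl(\epsilon_{kij}E_iH_j^*\bigr)$ and apply the ordinary Leibniz rule. Because each $F_k$ depends only on $x_k$ (the cross-symmetry hypothesis of Theorem \ref{t:61}), the weight $F_k$ can be attached to each factor. This gives
\[
\operatorname{div}_{(F,\alpha)}(\vec{E}\times\vec{H}^*)=\vec{H}^*\cdot\operatorname{curl}_{(F,\alpha)}\vec{E}-\vec{E}\cdot\operatorname{curl}_{(F,\alpha)}\vec{H}^*.
\]

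\textbf{Step 2 (substitution).} Insert the two field equations, take real parts and multiply by $\tfrac12$. The term $i\omega\mu|\vec{H}|^2$ is purely imaginary. The term $\vec{E}\cdot\overline{(-i\omega\epsilon\vec{E})}$ is purely imaginary for real $\epsilon$. This leaves $-\tfrac12\operatorname{Re}(\vec{J}_{ext}^*\cdot\vec{E})$.

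\textbf{Step 3 (source of $\sigma_{geom}$).} Steps 1--2 alone give an identity with no geometric term. So $\sigma_{geom}$ must come from the second-order structure of the fields, and I would obtain it as follows.
\begin{itemize}
\item Eliminate $\vec{H}=(i\omega\mu)^{-1}\operatorname{curl}_{(F,\alpha)}\vec{E}$.
\item For the divergence-free part, use $\operatorname{div}_{(F,\alpha)}\vec{E}=0$ and the factorization of Theorem \ref{t:62} to write $\operatorname{curl}_{(F,\alpha)}\operatorname{curl}_{(F,\alpha)}\vec{E}=(D_{MT}^{(F,\alpha)})^2\vec{E}=-\Delta_{\mathbb H}^{(F,\alpha)}\vec{E}-\mathcal{R}^{(F,\alpha)}[\vec{E}]$.
\item Pair this with $\vec{E}^*$ and divide by $\omega\mu$. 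The real part of the resulting term $(\omega\mu)^{-1}\operatorname{Re}\langle \vec{E}^*,\mathcal{R}^{(F,\alpha)}[\vec{E}]\rangle$ is the geometric loss, in direct analogy with the dissipation term of Theorem \ref{t:621}.
\item Compare with the Step 1 form of $\operatorname{div}_{(F,\alpha)}\vec{S}$. Where the $\mathcal{R}^{(F,\alpha)}$ contribution is kept separate rather than absorbed into the pure Laplacian part, it appears as an additional term.
\end{itemize}

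\textbf{Step 4 (main obstacle).} The hardest step is converting $\operatorname{Re}\langle\vec{E}^*,\mathcal{R}^{(F,\alpha)}[\vec{E}]\rangle$, which involves first derivatives $\partial_{x_k}\vec{E}$, into the pointwise form $\sum_k F_k\partial_{x_k}F_k\,|\vec{E}|^2$. Such a replacement is not an identity for arbitrary fields. My first attempt would use $\operatorname{Re}(\vec{E}^*\cdot\partial_k\vec{E})=\tfrac12\partial_k|\vec{E}|^2$ and integrate by parts in the measure $d\mu_{(F,\alpha)}$ of Definition \ref{d:620}, so that the kernel derivative falls onto the weight. I expect this to yield $\sigma_{geom}$ only in integrated form or only up to a divergence, and possibly only for real-valued fields, since Step 2 already uses real $\epsilon$. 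Failing that, I would state explicitly the closure assumption under which the equality holds, namely that the residue acts as an effective conductivity $\mathcal{R}^{(F,\alpha)}[\vec{E}]\approx\bigl(\sum_kF_k\partial_{x_k}F_k\bigr)\vec{E}$. The identity with $\sigma_{geom}$ may therefore hold only conditionally, and the proof must make that condition explicit.
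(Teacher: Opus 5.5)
Your Steps 1--2 are correct, and they already determine $\operatorname{div}_{(F,\alpha)}\vec{S}$ completely. Since $F_k\,\partial_{x_k}$ is a first-order operator with a real coefficient, it obeys the Leibniz rule; the cross-symmetry hypothesis is not needed for this. So $\operatorname{div}_{(F,\alpha)}(\vec{E}\times\vec{H}^*)=\vec{H}^*\cdot\operatorname{curl}_{(F,\alpha)}\vec{E}-\vec{E}\cdot\operatorname{curl}_{(F,\alpha)}\vec{H}^*$ holds pointwise. Inserting the field equations gives, for real $\epsilon,\mu$, the exact identity
\[
\operatorname{div}_{(F,\alpha)}\vec{S}=-\tfrac12\operatorname{Re}\bigl(\vec{J}_{ext}^*\cdot\vec{E}\bigr).
\]
The gap is in Step 3. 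The inference ``Steps 1--2 give no geometric term, so $\sigma_{geom}$ must come from the second-order structure'' is a non sequitur. Eliminating $\vec{H}$ and splitting $\operatorname{curl}_{(F,\alpha)}\operatorname{curl}_{(F,\alpha)}\vec{E}$ into a $\Delta_{\mathbb H}^{(F,\alpha)}$-part and an $\mathcal{R}^{(F,\alpha)}$-part only rewrites the same quantity $\operatorname{div}_{(F,\alpha)}\vec{S}$, whose value is already fixed; it cannot add a term. You have also dropped a factor of $i$. The coefficient is $\overline{(i\omega\mu)^{-1}}=i/(\omega\mu)$, so after taking real parts the $\mathcal{R}^{(F,\alpha)}$-piece contributes a multiple of $(\omega\mu)^{-1}\sum_k F_k\partial_{x_k}F_k\,\operatorname{Im}(\vec{E}\cdot\partial_{x_k}\vec{E}^*)$. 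That is a phase-gradient term, not $\operatorname{Re}\langle\vec{E}^*,\mathcal{R}^{(F,\alpha)}[\vec{E}]\rangle$. Under your closure ansatz $\mathcal{R}^{(F,\alpha)}[\vec{E}]=(\sum_kF_k\partial_{x_k}F_k)\vec{E}$ with a real multiplier, the corresponding term is purely imaginary and its real part vanishes. So even your conditional version does not produce $\sigma_{geom}|\vec{E}|^2$.

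The underlying problem is that the statement is false as written. Combined with the exact identity above, it is equivalent to $\sigma_{geom}|\vec{E}|^2\equiv0$. For a counterexample, take the Tarasov kernel with $\alpha_k<1$ on $\Omega=(0,\infty)^3$ and set $y_k=\int_1^{x_k}dt/F_k(t,\alpha_k)$. Then $F_k\partial_{x_k}=\partial_{y_k}$, and the deformed system becomes the classical Maxwell system in $\vec{y}$. Consider the plane wave $\vec{E}=\vec{E}_0e^{i\lambda\hat{n}\cdot\vec{y}}$, $\vec{H}=\lambda\,\hat{n}\times\vec{E}/(\omega\mu)$, with $\hat{n}\perp\vec{E}_0$, real $\lambda=\omega\sqrt{\epsilon\mu}$ and $\vec{J}_{ext}=0$. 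It has constant $\vec{S}$, hence $\operatorname{div}_{(F,\alpha)}\vec{S}=0$. But $\sigma_{geom}|\vec{E}|^2=(\omega\mu)^{-1}|\vec{E}_0|^2\sum_kC_k^2(1-\alpha_k)x_k^{1-2\alpha_k}>0$, where $C_k=\Gamma(\alpha_k/2)/\pi^{\alpha_k/2}$. The paper's proof does not get around this. After the same computation as your Steps 1--2, it simply asserts that applying Theorem \ref{t:62} turns $\mathcal{R}^{(F,\alpha)}[\vec{E}]$ into the ohmic term $\sigma_{geom}|\vec{E}|^2$. That is exactly the replacement you flagged in Step 4 as not being an identity. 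Your suspicion was right, but the correct conclusion is stronger than ``holds only conditionally.'' What can be proved is the Poynting identity without $\sigma_{geom}$, and the stated equality fails wherever $\vec{E}\neq0$ and $\sum_kF_k\partial_{x_k}F_k\neq0$.
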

\begin{proof}

Calculation and substituting the field equations we obtain $i\omega\mu \vert{}\vec{H}\vert{}^2 - i\omega\epsilon \vert{}\vec{E}\vert{}^2 - \langle \vec{E}, \vec{J}_{ext}^* \rangle$.

Taking the real part and applying Theorem \ref{t:62} to the decoupled wave equation, we obtain that $\mathcal{R}^{(F,\alpha)}[\vec{E}]$ is an ohmic loss term $\sigma_{geom} \vert{}\vec{E}\vert{}^2$.
\end{proof}

\subsection{Applications to Deformed Physical Systems}

\begin{thm}\label{t:63}(Reformulation of the Generalized Harmonic Maxwell System)

Let $\Omega \subset \mathbb{R}^3$ be an open domain. Consider the complex electromagnetic vector fields $\vec{E}, \vec{H} \in C^1(\Omega; \mathbb{C}^3)$ that satisfy the zero divergence conditions $div_{(\alpha)}[\vec{E}] = 0$ and $div_{(\alpha)}[\vec{H}] = 0$.

Let us define the quaternionic vector fields $\vec{\varphi}, \vec{\psi}: \Omega \rightarrow \mathbb{H}(\mathbb{C})$ by:

$$\vec{\varphi} := -i\omega\epsilon\vec{E} + \lambda\vec{H} \quad \text{y} \quad \vec{\psi} := i\omega\epsilon\vec{E} + \lambda\vec{H} \quad \text{where } \lambda = \omega\sqrt{\epsilon\mu} \text{ e } Im(\lambda) \ge 0$$

Then, the pair $(\vec{E}, \vec{H})$ satisfies the time-harmonic Maxwell-NIDS system:

$$\begin{cases} curl_{(\alpha)}[\vec{H}] = \sigma\vec{E} \\ curl_{(\alpha)}[\vec{E}] = i\omega\mu\vec{H} \\ div_{(\alpha)}[\vec{H}] = 0 \\ div_{(\alpha)}[\vec{E}] = 0 \end{cases}$$

if and only if the pair $(\vec{\varphi}, \vec{\psi})$ satisfies the matrix quaternionic equation:

$$\mathcal{M}_{\lambda}^{(\alpha)} \begin{pmatrix} \vec{\varphi} \\ \vec{\psi} \end{pmatrix} = \begin{pmatrix} (D_{MT}^{(\alpha)} - \lambda\mathcal{I})[\vec{\varphi}] \\ (D_{MT}^{(\alpha)} + \lambda\mathcal{I})[\vec{\psi}] \end{pmatrix} = \begin{pmatrix} 0 \\ 0 \end{pmatrix}$$
\end{thm}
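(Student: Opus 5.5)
The plan is to reduce the statement to the quaternionic form of the NIDS Maxwell system and then run the same linear-algebra argument used in the proof of the earlier reformulation theorem of Section 5. Throughout I assume, as the definitions of $\vec{\varphi},\vec{\psi}$ and the computation in that earlier proof require, that $\sigma=-i\omega\epsilon$ (no ohmic conductivity) and $\lambda^2=\omega^2\epsilon\mu$.

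\emph{Step 1.} For purely vectorial $\vec{E},\vec{H}$ with $\mathrm{div}_{(\alpha)}[\vec{E}]=\mathrm{div}_{(\alpha)}[\vec{H}]=0$, definition (\ref{conformalMT}) gives
\[
D_{MT}^{(\alpha)}[\vec{E}]=\mathrm{curl}_{(\alpha)}[\vec{E}],\qquad D_{MT}^{(\alpha)}[\vec{H}]=\mathrm{curl}_{(\alpha)}[\vec{H}].
\]
The scalar part vanishes because it is exactly $-\mathrm{div}_{(\alpha)}$. Hence, under the standing divergence-free hypothesis, the Maxwell-NIDS system is equivalent to the pair (\ref{D1})--(\ref{D2}), namely $D_{MT}^{(\alpha)}[\vec{E}]=i\omega\mu\vec{H}$ and $D_{MT}^{(\alpha)}[\vec{H}]=-i\omega\epsilon\vec{E}$.

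\emph{Step 2 (forward direction).} Use the $\mathbb{C}$-linearity of $D_{MT}^{(\alpha)}$; the complex scalars $\omega,\epsilon,\lambda$ commute with the quaternion units. Apply it to $\vec{\varphi}$ and $\vec{\psi}$ and substitute (\ref{D1})--(\ref{D2}). Using $-i\omega\epsilon\cdot i\omega\mu=\lambda^2$, this gives
\[
D_{MT}^{(\alpha)}[\vec{\varphi}]=\lambda\vec{\varphi},\qquad D_{MT}^{(\alpha)}[\vec{\psi}]=-\lambda\vec{\psi},
\]
exactly as in (\ref{ApplyingDMTtovarphi})--(\ref{ApplyingDMTtopsi}). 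So the diagonal operator $\mathcal{M}^{(\alpha)}_\lambda$ annihilates $(\vec{\varphi},\vec{\psi})^T$.

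\emph{Step 3 (converse).} Invert the change of variables. Assuming $\lambda\neq0$ and $\omega\epsilon\neq0$,
\[
\vec{H}=\frac{\vec{\varphi}+\vec{\psi}}{2\lambda},\qquad \vec{E}=\frac{\vec{\psi}-\vec{\varphi}}{2i\omega\epsilon}.
\]
Apply $D_{MT}^{(\alpha)}$ and use $D_{MT}^{(\alpha)}[\vec{\varphi}]=\lambda\vec{\varphi}$ and $D_{MT}^{(\alpha)}[\vec{\psi}]=-\lambda\vec{\psi}$:
\[
D_{MT}^{(\alpha)}[\vec{H}]=\frac{\vec{\varphi}-\vec{\psi}}{2}=-i\omega\epsilon\vec{E},\qquad D_{MT}^{(\alpha)}[\vec{E}]=-\lambda\,\frac{\vec{\varphi}+\vec{\psi}}{2i\omega\epsilon}=\frac{i\lambda^2}{\omega\epsilon}\vec{H}=i\omega\mu\vec{H}.
\]
This recovers (\ref{D1})--(\ref{D2}). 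Separating scalar and vector parts, as in Step 1, then returns the curl equations together with the divergence conditions.

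The main obstacle is consistency of constants rather than analysis. The statement writes $\mathrm{curl}_{(\alpha)}[\vec{H}]=\sigma\vec{E}$, but the eigenvalue identities only hold when $\sigma=-i\omega\epsilon$ and $\lambda^2=\omega^2\epsilon\mu$. I would state this identification explicitly, also require $\lambda\neq0$, and note that the divergence conditions are needed in both directions to make the scalar parts vanish.
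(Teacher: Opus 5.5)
Your proof is correct. Steps 1 and 2 coincide with the paper's proof: you reduce the system, under the divergence-free hypothesis, to $D_{MT}^{(\alpha)}[\vec{E}]=i\omega\mu\vec{H}$ and $D_{MT}^{(\alpha)}[\vec{H}]=-i\omega\epsilon\vec{E}$, and the paper makes the same identification $\sigma=-i\omega\epsilon$ in passing. You then use linearity and $\lambda^2=\omega^2\epsilon\mu$ to get $D_{MT}^{(\alpha)}[\vec{\varphi}]=\lambda\vec{\varphi}$ and $D_{MT}^{(\alpha)}[\vec{\psi}]=-\lambda\vec{\psi}$, as the paper does.

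The converse is where you differ, and your version is the sounder one. The paper substitutes only $D_{MT}^{(\alpha)}[\vec{\varphi}]=\lambda\vec{\varphi}$. It then ``equates coefficients'' of the physical parameters to extract both $D_{MT}^{(\alpha)}[\vec{E}]$ and $D_{MT}^{(\alpha)}[\vec{H}]$. That single relation amounts to $-i\omega\epsilon\bigl(D_{MT}^{(\alpha)}[\vec{E}]-i\omega\mu\vec{H}\bigr)+\lambda\bigl(D_{MT}^{(\alpha)}[\vec{H}]+i\omega\epsilon\vec{E}\bigr)=0$. On its own it cannot force both brackets to vanish.

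Your explicit inversion $\vec{H}=(\vec{\varphi}+\vec{\psi})/(2\lambda)$, $\vec{E}=(\vec{\psi}-\vec{\varphi})/(2i\omega\epsilon)$ uses both eigen-relations and closes that step rigorously. The price is the nondegeneracy assumption $\lambda\neq0$, $\omega\epsilon\neq0$. The paper leaves this implicit, although it is already needed for $B_1$ to be defined.

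One small correction to your closing remark: in the converse direction the divergence conditions are not needed. Since $D_{MT}^{(\alpha)}[\vec{E}]=i\omega\mu\vec{H}$ is purely vectorial, its scalar part gives $\mathrm{div}_{(\alpha)}[\vec{E}]=0$ for free, and likewise for $\vec{H}$. Your Step 3 in fact already says this.
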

\begin{proof}

By definition, the action of the operator $D_{MT}^{(\alpha)}$ on a pure vector field $\vec{f}$ (where the scalar part $f_0 = 0$) is given by:

\begin{equation}\label{e:631}
D_{MT}^{(\alpha)}[\vec{f}] = -div_{(\alpha)}[\vec{f}] + curl_{(\alpha)}[\vec{f}]
\end{equation}

Since $div_{(\alpha)}[\vec{E}] = 0$ and $div_{(\alpha)}[\vec{H}] = 0$, the operator reduces to:

$$D_{MT}^{(\alpha)}[\vec{E}] = curl_{(\alpha)}[\vec{E}] \quad \text{y} \quad D_{MT}^{(\alpha)}[\vec{H}] = curl_{(\alpha)}[\vec{H}]$$

Consequently, the rotational equations of the Maxwell NIDS system are equivalent to:

$$D_{MT}^{(\alpha)}[\vec{E}] = i\omega\mu\vec{H} \quad \text{y} \quad D_{MT}^{(\alpha)}[\vec{H}] = \sigma\vec{E} = -i\omega\epsilon\vec{E} \quad (\text{since } \sigma = -i\omega\epsilon)$$

Assuming that $\vec{E}$ and $\vec{H}$ satisfy the Maxwell NIDS system.

Applying the linearity of the operator $D_{MT}^{(\alpha)}$ on the field $\vec{\varphi}$:

$$D_{MT}^{(\alpha)}[\vec{\varphi}] = D_{MT}^{(\alpha)}[-i\omega\epsilon\vec{E} + \lambda\vec{H}] = -i\omega\epsilon D_{MT}^{(\alpha)}[\vec{E}] + \lambda D_{MT}^{(\alpha)}[\vec{H}]$$

Substituting now into the equation \eqref{e:631}:

$$D_{MT}^{(\alpha)}[\vec{\varphi}]=-i\omega\epsilon(i\omega\mu\vec{H}) + \lambda(-i\omega\epsilon\vec{E})= \omega^2\epsilon\mu\vec{H} - i\omega\epsilon\lambda\vec{E}$$

Taking into account that $\lambda^2 = \omega^2\epsilon\mu$:

$$D_{MT}^{(\alpha)}[\vec{\varphi}] = \lambda^2\vec{H} - i\omega\epsilon\lambda\vec{E} = \lambda(-i\omega\epsilon\vec{E} + \lambda\vec{H}) = \lambda\vec{\varphi}$$

By rearranging terms we obtain the first component of the matrix:

$$(D_{MT}^{(\alpha)} - \lambda\mathcal{I})[\vec{\varphi}] = 0$$

An analogous procedure on the field $\vec{\psi}$ results in:

$$D_{MT}^{(\alpha)}[\vec{\psi}] = D_{MT}^{(\alpha)}[i\omega\epsilon\vec{E} + \lambda\vec{H}]=i\omega\epsilon(i\omega\mu\vec{H}) + \lambda(-i\omega\epsilon\vec{E})= -\lambda^2\vec{H} - i\omega\epsilon\lambda\vec{E}=-\lambda(i\omega\epsilon\vec{E} + \lambda\vec{H})=-\lambda\vec{\psi}$$

Obtaining the second component:

$$(D_{MT}^{(\alpha)} + \lambda\mathcal{I})[\vec{\psi}] = 0$$

Suppose $D_{MT}^{(\alpha)}[\vec{\varphi}] = \lambda\vec{\varphi}$. Substituting the explicit definition of $\vec{\varphi}$ we obtain:

$$-i\omega\epsilon D_{MT}^{(\alpha)}[\vec{E}] + \lambda D_{MT}^{(\alpha)}[\vec{H}] = -i\omega\epsilon\lambda\vec{E} + \lambda^2\vec{H}$$

Substituting $\lambda^2 = \omega^2\epsilon\mu$ on the right side:

$$-i\omega\epsilon D_{MT}^{(\alpha)}[\vec{E}] + \lambda D_{MT}^{(\alpha)}[\vec{H}] = -i\omega\epsilon\lambda\vec{E} + \omega^2\epsilon\mu\vec{H} = -i\omega\epsilon(\lambda\vec{E} + i\omega\mu\vec{H})$$

Equating the real and imaginary parts or the coefficients associated with the physical parameters $(\epsilon, \mu)$, we deduce:

$$D_{MT}^{(\alpha)}[\vec{E}] = i\omega\mu\vec{H} \quad \text{y} \quad D_{MT}^{(\alpha)}[\vec{H}] = -i\omega\epsilon\vec{E}$$

Finally, by taking the scalar part and the vector part of the quaternionic relation through the decomposition $D_{MT}^{(\alpha)}[\vec{f}] = -div_{(\alpha)}[\vec{f}] + curl_{(\alpha)}[\vec{f}]$, the annulment of the scalar part guarantees $div_{(\alpha)}[\vec{E}] = 0$ and $div_{(\alpha)}[\vec{H}] = 0$, while the vector part recovers Maxwell's equations for curls.
\end{proof}

\begin{ex}
To illustrate clearly and visually how the generalized local derivative framework contains Tarasov's Non-Integer Dimension Spaces (NIDS) approach as a special case, we can construct a numerical example based on a first-order linear differential equation (a decay model).

Consider the generalized differential equation for a state function $y(x)$:
$$D_{(F,\alpha)} y(x) = - \lambda y(x), \quad y(1) = y_0,$$
where $D_{(F,\alpha)} y(x) = F(x, \alpha) \displaystyle\frac{dy}{dx}$. The general analytical solution, assuming a lower limit at $x=1$ to avoid singularities at the origin, is given by:
$$y(x) = y_0 \exp\left( -\lambda \int_{1}^{x} \frac{1}{F(t, \alpha)} dt \right)$$

\textbf{1. Tarasov Case (NIDS)}

In the NIDS approach, the kernel $F(x, \alpha)$ is rigidly coupled to the fractal geometry of the space through the inverse density of states:
$$F_{\text{Tarasov}}(x, \alpha) = c_1^{-1}(\alpha, x) = \frac{\Gamma(\alpha/2)}{\pi^{\alpha/2}} x^{1-\alpha}.$$
Substituting this into the solution, the integral results in a pure power law:
$$y_{\text{Tarasov}}(x) = y_0 \exp\left( -\lambda \frac{\pi^{\alpha/2}}{\Gamma(\alpha/2)} \frac{x^\alpha - 1}{\alpha} \right)$$

\textbf{2. Generalized Case}

The DLG framework allows us to define more flexible kernels $F(x, \alpha)$ that recover complex physical behaviors (such as saturation effects, logarithmic scales, or phase transitions in porous media) that the rigid NIDS model cannot describe on its own.

For a motivating example, consider a generalized kernel that models a medium where the effective dimension changes or stabilizes logistically on larger scales:
$$F_{\text{Generalized}}(x, \alpha) = \frac{\Gamma(\alpha/2)}{\pi^{\alpha/2}} x^{1-\alpha} \cdot \left( 1 + \beta \ln(x) \right).$$
When $\beta = 0$, we recover exactly the Tarasov case. When $\beta > 0$, the kernel introduces a structural correction to transport in non-integer metric space. The analytical solution is:
$$y_{\text{Generalized}}(x) = y_0 \exp\left( -\lambda \frac{\pi^{\alpha/2}}{\Gamma(\alpha/2)} \frac{x^\alpha (1 + \beta \ln(x)) - 1 - \frac{\beta}{\alpha}(x^\alpha - 1)}{\alpha} \right).$$
Set the following values for the numerical simulation:

$y_0 = 10$ (Initial condition at $x=1$)

$\lambda = 0.5$ (Decay constant)

$\alpha = 0.7$ (Fractional/non-integer dimension of the space)

$\beta = 0.4$ (Generalized deviation parameter)

\begin{figure}[h] 
    \centering
    \includegraphics[width=0.9\textwidth]{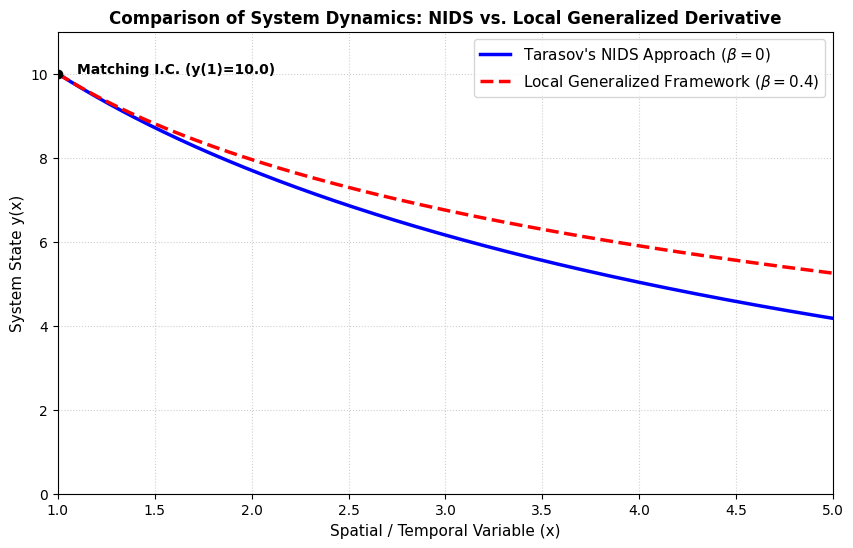}
    \caption{Comparison between the generalized model and the NIDS}
    \label{figure}
\end{figure}

The figure shows the following characteristics of the curves:
\begin{itemize}
\item Common Starting Point: Both curves originate precisely at $(1, 10)$, confirming that they share the same physical space and the same initial boundary condition.
\item Trajectory Divergence: As $x > 1$, the red curve (Generalized) decays at a different rate than the blue curve (Tarasov). This is because the kernel $F_{\text{Generalized}}$ includes a variable logarithmic damping term $\left(1 + \beta \ln(x)\right)$.
\item Proof of Generality: Modifying the code and assign $\beta = 0$, we will see that the red curve overlaps identically with the blue Tarasov curve. This numerically shows the central argument of coupling section: the DLG formalism does not contradict Tarasov, but absorbs it as a stationary or ideal state, allowing to grade the deformation of space by freely choose of the kernel profile. The separation of the two curves as $x \to 5$ shows how the generalized kernel responds to phenomena where the density of states (DOS) is not a rigid power law, but changes dynamically with scale (logarithmic scaling), capturing dynamics that Tarasov'approach cannot compactly be described in a pure non-integer space.
\end{itemize}
\end{ex}

\subsection{Physical Interpretation of the Residual Term $R_{(F,\alpha)}[f]$ in Deformed Media.}
To understand the phenomenological impact of the generalized local derivative on developed physical models, it is imperative to analyze the nature of the residual operator $R_{(F,\alpha)}[f]$ obtained in Theorem \ref{t:63}. From an analytical perspective, the modified wave or diffusion differential equation in this generalized space not only alters the metric of the classical Laplacian but also introduces a first-order perturbation coupled to the geometry of the medium.\\

\textit{1. In the Theory of Three-Dimensional Elasticity (Lamé-Navier Equation)}

In the classical Lamé-Navier model, the dynamic equilibrium of a homogeneous and isotropic medium is governed by elastic restoring forces proportional to the second derivatives of the displacement (the stress tensor). Projecting the equations into the framework of the generalized local derivative, the appearance of the term:
$$R_{(F,\alpha)}[\vec{u}] = -\sum_{k=1}^3 F_k(x_k, \alpha_k) \frac{\partial F_k(x_k, \alpha_k)}{\partial x_k} \frac{\partial \vec{u}}{\partial x_k}$$
substantially modifies the equation of motion. Physically, this term, proportional to the first spatial derivative of the displacement $\frac{\partial \vec{u}}{\partial x_k}$, acts as an effective viscous friction force or spatial geometric damping. Unlike ordinary kinematic viscosity (which dissipates energy temporarily through a term $\displaystyle\frac{\partial \vec{u}}{\partial t}$), this is an intrinsic geometric dissipation. This means that as the elastic wave propagates through the fractal medium, it experiences an apparent energy loss (attenuation) due solely to the lack of homogeneity in the density of states (DOS) of the metric space. The spatial variations of the kernel, represented by the gradient of the metric profile $\displaystyle\frac{\partial F_k}{\partial x_k}$, act as micro-obstacles or confinement fluctuations that disperse the elastic wave's energy without the need to incorporate a macroscopic thermal friction mechanism.\\

\textit{2. In Fractal Electrodynamics (Time-Harmonic Maxwell System)}

An analogous and extremely rich interpretation occurs in the modified Maxwell system. In a homogeneous ideal medium, the wave equations for the electric field $\vec{E}$ and the magnetic field $\vec{H}$ conserve the energy of the electromagnetic wavefront. However, when operating under the formalism associated to $D_{(F,\alpha)}^{MT}$, the square structure of the Moisil-Teodorescu operator pulls the residue $R_{(F,\alpha)}$ towards the generalized Helmholtz equations:
$$\Delta_{(F,\alpha)\mathbb{H}}[\vec{E}] - R_{(F,\alpha)}[\vec{E}] + \omega^2\mu\varepsilon\vec{E} = 0.$$
In this context, the operator $R_{(F,\alpha)}[\vec{E}]$ behaves mathematically similarly to the ohmic loss term $\sigma \displaystyle\frac{\partial \vec{E}}{\partial t}$, in a conductor with finite conductivity $\sigma$. Therefore, the gradient of the local kernel $\displaystyle\frac{\partial F_k}{\partial x_k}$ induces a virtual geometric conductivity. Electromagnetic waves traveling through this non-integer dimension space undergo amplitude attenuation and phase dispersion. This phenomenon is characteristic of propagation in disordered, porous, or fractal media (such as rough dielectrics or metamaterials), where the electromagnetic density of states (DOS) varies locally. The residual operator thus demonstrates that the metric distortion of space is physically equivalent to the presence of a dissipative or absorbent medium, directly connecting non-integer fractal topology with the thermodynamics of propagation processes.

\begin{rem}

It is important to note that the first-order residual operator $\mathcal{R}^{(F,\alpha)}[f]$, derived from the factorization of the generalized Laplacian, should not be interpreted strictly as an intrinsic physical mechanism (such as standard kinematic viscosity or the actual conductivity of the material). Rather, it represents a formal geometric perturbation induced by the spatial inhomogeneity of the density of fractal states (DOS) within the non-integer dimension framework. The energetic and dissipative analogies discussed in this section serve as a heuristic representation of the continuum, modeling how local geometric distortions emulate damping or attenuation phenomena in continuous media.
\end{rem}

\section{Conclusions}

In this work, we integrate calculus in non-integer dimensional spaces with quaternionic analysis; the main results obtained can be summarized as follows:

\begin{itemize}

\item The Non-Integer Dimensional Space (NIDS) version of the quaternionic Moisil-Teodorescu operator ($D_{MT}^{(\alpha)}$) and its right-sided counterpart ($D_{MT}^{(\alpha),r}$) were formulated based on vectorial differential operators scaled by Tarasov's density of states (DOS) functions.

\item The factorization of the NIDS quaternionic Laplacian ($\Delta_{\mathbb{H}}^{(\alpha)}$) via the square of the $D_{MT}^{(\alpha)}$ operator was demonstrated, as was the derivation of the NIDS Bitsadze operator ($\tilde{\Delta}_{\mathbb{H}}^{(\alpha)}$) through the composition $D_{MT}^{(\alpha),r}D_{MT}^{(\alpha)}$.

\item The Lamé-Navier system equation in continuous fractal media was reformulated, expressing it in terms of algebraic combinations of the quadratic factorizations of $D_{MT}^{(\alpha)}$ and compositions involving the right-sided operator.

\item An exact equivalence theorem was established between the time-harmonic Maxwell system of equations under the NIDS metric and a decoupled quaternionic matrix equation $(D_{MT}^{(\alpha)} \mp \lambda\mathcal{I})$ acting on the hypercomplex fields $\vec{\varphi}$ and $\vec{\psi}$.

\item The formulation was extended to the framework of generalized local derivatives (the $D_{(F,\alpha)}^{MT}$ operator for a kernel $F_k$), analytically demonstrating that squaring the operator yields a decomposition consisting of a scaled pure Laplacian ($\Delta_{\mathbb{H}}^{(F,\alpha)}$) and a first-order residual operator ($\mathcal{R}^{(F,\alpha)}$). The function of the residual term $\mathcal{R}^{(F,\alpha)}$ was quantified:

In elasticity, it represents spatial damping or intrinsic geometric friction that dissipates the wave's mechanical energy ($\frac{dE}{dt} \le 0$) due to kernel gradients.

In electrodynamics, it induces a virtual geometric conductivity ($\sigma_{\text{geom}}$) equivalent to ohmic losses or spatial dispersion caused by the heterogeneities of the continuous medium.

\end{itemize}

\section*{Statements and Declarations}
\subsection*{Funding} The first and third author were partially supported by Instituto Polit\'ecnico Nacional in the framework of SIP programs (grant numbers IND-2026-0101, SIP20260491). The second author was partially supported by Fundaci\'{o}n Universidad de las Am\'{e}ricas Puebla.
\subsection*{Competing Interests} The authors declare that they have no competing interests that could have appeared to influence the work reported in this paper.
\subsection*{Author contributions} JBR and MAPR conceived the study and wrote of first draft of the paper. JOGC and JENV oversaw the project progress. All authors provided critical feedback and helped shape the research, all  contributed to the final version of the manuscript. 
\subsection*{ORCID}
\noindent
Juan Bory-Reyes: https://orcid.org/0000-0002-7004-1794
\\
Marco Antonio P\'erez-de la Rosa:  https://orcid.org/0000-0003-3498-2914
\\
Jos\'e Oscar Gonz\'alez-Cervantes: https://orcid.org/0000-0003-4835-5436
\\
Juan Eduardo Napoles-Valdes: https://orcid.org/0000-0003-2470-1090

\end{document}